\tikzstyle{vertex} = [fill,shape=circle,node distance=80pt]
\tikzstyle{edge} = [fill,opacity=.5,fill opacity=.5,line cap=round, line join=round, line width=40pt]
\tikzstyle{elabel} =  [fill,shape=circle,node distance=30pt]
\begin{document}
\title{Line Multigraphs of General Hypergraphs}
 	
\author[K. Cardoso]{Kau\^e Cardoso} \address{Instituto Federal do Rio Grande do Sul - Campus Feliz, Feliz, RS, Brazil.}
\email{\tt kaue.cardoso@feliz.ifrs.edu.br}


\pdfpagewidth 8.5 in \pdfpageheight 11 in

\newcommand{\h}{\mathcal{H}}
\newcommand{\g}{\mathcal{G}}
\newcommand{\A}{\mathbf{A}}
\newcommand{\B}{\mathbf{B}}
\newcommand{\C}{\mathbf{C}}
\newcommand{\D}{\mathbf{D}}
\newcommand{\M}{\mathbf{M}}
\newcommand{\N}{\mathbf{N}}
\newcommand{\lin}{\mathcal{L}}
\newcommand{\cli}{\mathcal{C}}
\newcommand{\Q}{\mathbf{Q}}
\newcommand{\x}{\mathbf{x}}
\newcommand{\y}{\mathbf{y}}
\newcommand{\z}{\mathbf{z}}
\newcommand{\E}{\mathtt{E}}
\newcommand{\Ah}{\mathbf{A}(\mathcal{H})}

\theoremstyle{plain}
\newtheorem{theorem}{Theorem}[section]
\newtheorem{lemma}[theorem]{Lemma}
\newtheorem{proposition}[theorem]{Proposition}
\newtheorem{corollary}[theorem]{Corollary}

\theoremstyle{definition}
\newtheorem{definition}[theorem]{Definition}
\newtheorem{claim}[theorem]{Claim}
\newtheorem{question}[theorem]{Question}
\newtheorem{example}[theorem]{Example}
\newtheorem{remark}[theorem]{Remark}

\newcommand{\keyword}[1]{\textsf{#1}}

\begin{abstract}
A line multigraph is obtained from a hypergraph by taking its hyperedges as vertices and joining two of them by as many edges as the number of vertices they share. We develop a matrix theory for line multigraphs of general, not necessarily uniform, hypergraphs. The central tool is the identity $\mathbf{B}^\mathrm{T}\mathbf{B} = \mathbf{C} + \mathbf{A}_{\mathcal{L}}$, where $\mathbf{B}$ is the incidence matrix, $\mathbf{C}$ is the diagonal matrix of hyperedge cardinalities and $\mathbf{A}_{\mathcal{L}}$ is the adjacency matrix of the line multigtaph. From this identity, we prove that the eigenvalues of the line multigraph of a hypergraph of rank $r$ are at least $-r$, and we describe the eigenspace and the multiplicity of $-r$ through an essential core of the hypergraph. We also give an explicit combinatorial condition under which $-r$ is attained. As applications, we bound the spectral radius of the signless Laplacian matrix, characterizing the cases of equality, and we determine the complete signless Laplacian spectrum of a general power hypergraph. On the structural side, we show that connectivity, linearity, and regularity transfer between a hypergraph and its line multigraph, that every hypergraph shares its line multigraph with infinitely many others, and that each class of such hypergraphs contains a reduced representative.
\newline

\noindent \textsc{Keywords.} Incidence matrix, Line multigraph, Signless Laplacian matrix, Spectral radius, Power hypergraph
\newline

\noindent \textsc{AMS classification.} 05C65; 05C50; 15A18.
\end{abstract}

\maketitle

\section{Introduction}

In spectral graph theory, structural properties of a graph are studied through the eigenvalues and eigenvectors of a matrix associated with it, most often the adjacency matrix. Extending this study to hypergraphs, where a hyperedge may join more than two vertices, is more difficult. A natural higher-order analogue is the adjacency tensor introduced by Cooper and Dutle \cite{Cooper}, but computing tensor eigenvalues is hard, both in theory and in practice \cite{NP-hard}. For this reason, many authors have studied matrix representations of a hypergraph, which encode less structure than the tensor but allow the use of classical linear algebra \cite{Banerjee-matriz, kaue-lap, Feng, kr-regular, estrada-index, Rodriguez1, unified-matrix-I, unified-matrix-II, distance, extremal-hypertree}. In this paper, we develop one such representation, the \textit{line multigraph}, into a theory that applies to hypergraphs with hyperedges of arbitrary sizes.

Given a hypergraph $\mathcal{H}$, its line multigraph $\mathcal{L}(\mathcal{H})$ records how the hyperedges of $\mathcal{H}$ overlap: each hyperedge becomes a vertex, and two such vertices are joined by as many edges as the number of vertices the corresponding hyperedges share. This construction is closely related to the classical line graph of a graph \cite{line-1,line-1932,line-0} and to intersection graphs of hypergraphs \cite{line-3-uniform, line-survei-arx, line, line-survei}. The two notions coincide on linear hypergraphs; in general, however, the intersection graph discards edge multiplicities, while the line multigraph keeps them. We introduced the line multigraph, together with its basic spectral properties, in \cite{kaue-energia,kaue-lap}, but only for uniform hypergraphs, in which every hyperedge has the same cardinality. This restriction is significant: many natural hypergraphs are not uniform, and the tools of these two previous papers do not apply to them directly. One purpose of this paper is to remove this restriction.

Our approach is based on the incidence matrix $\mathbf{B}(\mathcal{H})$ of a hypergraph, whose $(v,e)$ entry records whether the vertex $v$ belongs to the hyperedge $e$. For a $k$-uniform hypergraph, it is known that $\mathbf{B}^\mathrm{T}\mathbf{B} = k\mathbf{I} + \mathbf{A}_{\mathcal{L}}$, where $\mathbf{A}_{\mathcal{L}}$ is the adjacency matrix of the line multigraph \cite{kaue-lap}, and this single identity supports essentially all of the uniform theory. When hyperedges may have different sizes, the scalar matrix $k\mathbf{I}$ must be replaced by the diagonal matrix $\mathbf{C}$ of hyperedge cardinalities, which gives the identity $\mathbf{B}^\mathrm{T}\mathbf{B} = \mathbf{C} + \mathbf{A}_{\mathcal{L}}$ (Theorem \ref{teo:multigrafo}). Most spectral results in this paper are, in some way, a consequence of this identity.

On the structural side, we show that connectivity, linearity, and regularity transfer between a hypergraph and its line multigraph. In particular, $\mathcal{L}(\mathcal{H})$ is regular exactly when $\mathcal{H}$ satisfies a new condition that we call skew edge-regularity, which agrees with the edge-regularity of \cite{kaue-sign-vector} on uniform hypergraphs (Theorem \ref{teo:arestareg}). We also show that the map $\mathcal{H} \mapsto \mathcal{L}(\mathcal{H})$ is far from injective: every connected multigraph in which each vertex has at least two distinct neighbors is the line multigraph of some hypergraph (Proposition \ref{prop:all_multigraphs}), and every hypergraph shares its line multigraph with infinitely many others (Remark \ref{rem:redundancy}).

On the spectral side, the identity $\mathbf{B}^\mathrm{T}\mathbf{B} = \mathbf{C} + \mathbf{A}_{\mathcal{L}}$ immediately implies that every eigenvalue of $\mathcal{L}(\mathcal{H})$ is at least $-r$, where $r$ is the rank of $\mathcal{H}$ (Proposition \ref{teo:lower_bound}). Beyond this bound, the incidence matrix approach gives a complete description of the eigenspace of $-r$. We isolate an \textit{essential core} of $\mathcal{H}$, obtained by repeatedly discarding hyperedges that cannot support an eigenvector, and we show that this core determines both the eigenspace and its exact dimension (Theorem \ref{teo:multiplicity}). We also introduce \textit{collars}, a hypergraph analogue of even cycles, and we show that the presence of a collar is a purely combinatorial sufficient condition for $-r$ to be an eigenvalue (Theorem \ref{teo:collar_eigenvalue} and Corollary \ref{cor:collar_general}).

These structural and spectral tools are applied in Section \ref{sec:App} to the signless Laplacian matrix $\mathbf{Q}(\mathcal{H}) = \mathbf{B}\mathbf{B}^\mathrm{T}$. We bound its spectral radius in terms of the rank and the co-rank of $\mathcal{H}$, and we characterize the cases of equality (Theorems \ref{teo:raio_espectral} and \ref{teo:qcotas}), extending results previously known only for uniform hypergraphs \cite{kaue-sign-vector, kaue-lap}. Power hypergraphs have been studied from both the matrix and the tensor point of view \cite{Kaue-power,Khan,Lucas-integral}, but little was known in the non-uniform case; we determine the complete signless Laplacian spectrum of a general power hypergraph (Theorem \ref{teo:power_spectrum}), which had previously been computed only when the base hypergraph is uniform \cite{kaue-lap}.

\medskip
\noindent\textbf{Relation to previous work.} We now state precisely what is old, meaning already contained in our own earlier work on uniform hypergraphs, and what is new, meaning not considered before even in the uniform case. For uniform hypergraphs, the following results were established in \cite{kaue-lap}: the incidence matrix identity of Theorem \ref{teo:multigrafo}, the resulting lower bound $-k$ (Proposition \ref{teo:lower_bound}), the degree formula for line multigraphs (Lemma \ref{lem:graulinha}), the equality in Theorem \ref{teo:raio_espectral}, the bounds of Theorem \ref{teo:qcotas}, and the signless Laplacian spectrum of power hypergraphs with uniform base (Theorem \ref{teo:power_spectrum}). The scaling of line multigraphs under power operations (Lemma \ref{lem:linhk} and Proposition \ref{teo:espec-power-lin}) was established, for the uniform case, in \cite{kaue-energia}. The notion of edge-regularity, and the characterization of equality in the bounds of Theorem \ref{teo:qcotas}, were given, again for the uniform case, in \cite{kaue-sign-vector}. All of these results were stated only for uniform hypergraphs. Every other result in this paper is new even in the uniform case, and none of it was previously available for general hypergraphs. The generalizations from the uniform case to the general case presented in this paper often lead to proofs that are shorter and more elegant than the original ones, because the theory of line multigraphs developed here condenses information about the hypergraph that earlier proofs had to assemble separately in each case.

The remainder of the paper is organized as follows. Section \ref{sec:pre} fixes notation and basic definitions. Section \ref{sec:structural} develops the structural theory summarized above. Section \ref{sec:incidence} introduces the incidence identity and its spectral consequences. Section \ref{sec:App} applies this theory to the signless Laplacian matrix. Section \ref{sec:conclusion} presents a short discussion and some open problems raised in the paper.

\section{Preliminaries}\label{sec:pre}

In this section, we introduce the fundamental definitions, terminology, and notation for hypergraphs and line multigraphs. This framework provides the foundation for the structural and spectral results in subsequent sections. Illustrative examples are included to clarify key concepts. For a more comprehensive treatment, we refer the reader to \cite{Bretto,kaue-lap}.

\begin{definition}
A \textit{hypergraph} $\mathcal{H}=(V,E)$ consists of a finite set of vertices $V(\mathcal{H})$ and a set of hyperedges $E(\mathcal{H})$, where each hyperedge is a non-empty subset of $V$. A hypergraph is called \textit{simple} if it contains no hyperedges of cardinality one and no hyperedge is contained in another. Throughout this work, we consider exclusively simple hypergraphs; therefore, for brevity, the term simple will be omitted hereafter. The \textit{rank} and the \textit{co-rank} of a hypergraph are defined as the largest and smallest cardinality of its hyperedges, respectively. For $k \ge 2$, a hypergraph $\mathcal{H}$ is \textit{$k$-uniform} if all its hyperedges have cardinality $k$. In this sense, a simple graph is precisely a $2$-uniform hypergraph.
\end{definition}

\begin{definition}
A \textit{multigraph} $\mathcal{G}=(V,E)$ is defined by a finite set of vertices $V(\mathcal{G})$ and a multiset of edges $E(\mathcal{G})$, where each edge consists of a set containing two vertices. For a hypergraph $\mathcal{H}$, its \textit{line multigraph} $\mathcal{L}(\mathcal{H})$ is the multigraph where $V(\mathcal{L}(\mathcal{H}))=E(\mathcal{H})$. The number of edges (multiplicity) connecting two distinct vertices $u, v \in V(\mathcal{L}(\mathcal{H}))$ is equal to the cardinality of the intersection between their corresponding hyperedges $e_u, e_v \in E(\mathcal{H})$; that is, the multiplicity is given by $|e_u \cap e_v|$.
\end{definition}

\begin{example}
In Figure \ref{fig:multi-lin}, we illustrate a hypergraph $\mathcal{H}$ with vertex set $V(\mathcal{H})=\{1,2,3,4,5\}$ and hyperedge set $E(\mathcal{H})=\{e_1, e_2, e_3\}$, where $e_1=\{1,2,3\}$, $e_2=\{1,4,5\}$, and $e_3=\{3,4,5\}$. Its corresponding line multigraph $\mathcal{L}(\mathcal{H})$ is also shown. Note that since $|e_2 \cap e_3| = 2$, there are two edges connecting the vertices $145$ and $345$ in $\mathcal{L}(\mathcal{H})$.

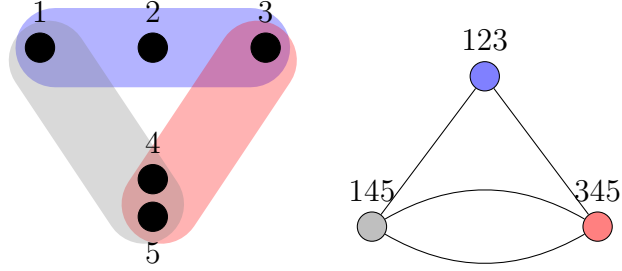
\begin{figure}[!h]	
	\centering	
	\begin{tikzpicture}
		\node[draw,circle,fill=black,label=above:\(1\)] (v1) at (0,2) {};
		\node[draw,circle,fill=black,label=above:\(2\)] (v2) at (1.5,2) {};
		\node[draw,circle,fill=black,label=above:\(3\)] (v3) at (3,2) {};
		\node[draw,circle,fill=black,label=above:\(4\)] (v4) at (1.5,0.25) {};
		\node[draw,circle,fill=black,label=below:\(5\)] (v5) at (1.5,-0.25) {};
		
		\begin{pgfonlayer}{background}
			\draw[edge,color=blue,line width=30pt, opacity=0.3] (v1) -- (v3);
			\draw[edge,color=red,line width=30pt, opacity=0.3] (v3) -- (v5);
			\draw[edge,color=gray,line width=30pt, opacity=0.3] (v5) -- (v1);			
		\end{pgfonlayer}				
	\end{tikzpicture}
	\quad
	\begin{tikzpicture}	
		\node[draw,circle,fill=blue!50, label=above:\(123\)] (a) at (1.5, 2) {};
		\node[draw,circle,fill=gray!50, label=above:\(145\)] (b) at (0, 0) {};
		\node[draw,circle,fill=red!50, label=above:\(345\)] (c) at (3, 0) {};
		
		\path
		(a) edge (b)
		(a) edge (c)
		(b) edge [bend left] (c)
		(b) edge [bend right] (c);
	\end{tikzpicture}
	\caption{The hypergraph $\mathcal{H}$ and its line multigraph $\mathcal{L}(\mathcal{H})$.}\label{fig:multi-lin}
\end{figure}  
\end{example}

\begin{definition}
Let $\mathcal{H}$ be a hypergraph or a multigraph. The \textit{(hyper)edge neighborhood} of a vertex $v \in V$, denoted by $E_{[v]}$, is the set of all (hyper)edges containing $v$. The \textit{degree} of a vertex $v \in V$, denoted by $d(v)$, is the number of (hyper)edges containing $v$; specifically, $d(v) = |E_{[v]}|$. A vertex is said to be \textit{isolated} if its degree is zero. To avoid ambiguity when dealing with multiple structures simultaneously, we denote the degree as $d_{\mathcal{H}}(v)$. The \textit{maximum}, \textit{minimum}, and \textit{average degrees} of $\mathcal{H}$ are defined, respectively, as:
$$\Delta(\mathcal{H}) = \max_{v \in V}\{d(v)\}, \quad \delta(\mathcal{H}) = \min_{v \in V}\{d(v)\}, \quad \text{and} \quad d(\mathcal{H}) = \frac{1}{|V|}\sum_{v \in V}d(v).$$
\end{definition}

\begin{definition}
In either a multigraph or a hypergraph, a \textit{path} is an alternating sequence of distinct vertices and (hyper)edges such that each (hyper)edge contains the vertices immediately preceding and following it in the sequence. A hypergraph or a multigraph is said to be \textit{connected} if there exists a path between every pair of vertices; otherwise, it is \textit{disconnected}.
\end{definition}

\begin{lemma}\label{lem:conect}
Let $\mathcal{H}$ be a hypergraph without isolated vertices. Then, $\mathcal{L}(\mathcal{H})$ is connected if and only if $\mathcal{H}$ is connected.
\end{lemma}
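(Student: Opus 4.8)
The plan is to reduce connectivity in both structures to the existence of walks, and to exhibit a canonical translation between hypergraph paths and walks in the line multigraph. First I would record the elementary observation that the edge multiplicities in $\mathcal{L}(\mathcal{H})$ are irrelevant for connectivity: two vertices $e_u, e_v$ of $\mathcal{L}(\mathcal{H})$ are joined by at least one edge exactly when $|e_u \cap e_v| \geq 1$, i.e. when the corresponding hyperedges meet. Hence $\mathcal{L}(\mathcal{H})$ is connected if and only if any two hyperedges can be linked by a chain of pairwise-intersecting hyperedges, and the argument becomes a matter of matching up paths on the two sides.

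For the direction ``$\mathcal{H}$ connected $\Rightarrow \mathcal{L}(\mathcal{H})$ connected'', I would take two hyperedges $e, f$, pick $x \in e$ and $y \in f$ (possible since hyperedges are non-empty), and use a path $x = v_0, g_1, v_1, \dots, g_k, v_k = y$ in $\mathcal{H}$. Since $e$ and $g_1$ both contain $v_0$, consecutive $g_i, g_{i+1}$ share $v_i$, and $g_k, f$ both contain $v_k$, the hyperedge sequence $e, g_1, \dots, g_k, f$ is a walk from $e$ to $f$ in $\mathcal{L}(\mathcal{H})$, witnessing that $e$ and $f$ lie in the same component.

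For the converse I would run the same translation backwards. Given $x, y \in V(\mathcal{H})$, the no-isolated-vertices hypothesis (which is exactly where it is used) supplies hyperedges $e \ni x$ and $f \ni y$; a walk $e = h_0, \dots, h_m = f$ in $\mathcal{L}(\mathcal{H})$ satisfies $h_j \cap h_{j+1} \neq \emptyset$, so choosing $w_j \in h_j \cap h_{j+1}$ turns $x, h_0, w_0, h_1, \dots, w_{m-1}, h_m, y$ into a walk from $x$ to $y$ in $\mathcal{H}$.

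The step I expect to require the most care is reconciling these walks with the formal definition of a path, which insists on distinct vertices and hyperedges, whereas the two translations above may produce repetitions. I would handle this once and for all by the standard walk-shortening argument: between the first and last appearance of any repeated term a walk can be excised to yield a genuine path with the same endpoints. Having established this reduction, producing a walk in either structure is enough to certify connectivity, and the two constructions above complete both implications.
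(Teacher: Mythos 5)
Your proof is correct and follows essentially the same approach as the paper's: translating paths between $\mathcal{H}$ and $\mathcal{L}(\mathcal{H})$ in both directions, invoking the no-isolated-vertices hypothesis exactly where the paper does (to attach hyperedges to the endpoints in the converse). The paper's proof is just a terse sketch of this same translation; your explicit walk constructions and the walk-to-path shortening argument supply the details it leaves implicit.
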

\begin{proof}
Suppose $\mathcal{H}$ is connected, and let $e, f \in E(\mathcal{H})$ be two distinct hyperedges. Choose vertices $u \in e$ and $w \in f$. Since $\mathcal{H}$ is connected, there is a path $u = v_0, e_1, v_1, \ldots, e_l, v_l = w$ in $\mathcal{H}$. Two consecutive hyperedges in this path share a vertex, so they are adjacent in $\mathcal{L}(\mathcal{H})$; moreover, $e$ shares the vertex $u$ with $e_1$, and $f$ shares the vertex $w$ with $e_l$. Concatenating these adjacencies yields a walk from $e$ to $f$ in $\mathcal{L}(\mathcal{H})$, and every walk contains a path between the same endpoints. Hence $\mathcal{L}(\mathcal{H})$ is connected.

Conversely, suppose $\mathcal{L}(\mathcal{H})$ is connected, and let $u, w \in V(\mathcal{H})$ be two distinct vertices. Since $\mathcal{H}$ has no isolated vertices, there exist hyperedges $e_0, e_l \in E(\mathcal{H})$ with $u \in e_0$ and $w \in e_l$. By hypothesis, there is a path $e_0, e_1, \ldots, e_l$ in $\mathcal{L}(\mathcal{H})$, and consecutive hyperedges in this path share at least one vertex, say $v_i \in e_{i-1} \cap e_i$. The sequence $u, e_0, v_1, e_1, v_2, \ldots, v_l, e_l, w$ is then a walk from $u$ to $w$ in $\mathcal{H}$, which again contains a path. Hence $\mathcal{H}$ is connected.
\end{proof}

Lemma \ref{lem:conect} establishes that connectivity is preserved between a hypergraph and its line multigraph. In this work, we consider only connected hypergraphs; consequently, their associated line multigraphs are also connected. To simplify the exposition, we shall henceforth omit explicit mention of connectivity and refer to these structures simply as hypergraphs.

\begin{definition}
A hypergraph $\mathcal{H}$ is said to be \textit{linear} if every pair of distinct hyperedges shares at most one vertex.
\end{definition}

\begin{lemma}
Let $\mathcal{H}$ be a hypergraph. The line multigraph $\mathcal{L}(\mathcal{H})$ is a simple graph if and only if $\mathcal{H}$ is linear.
\end{lemma}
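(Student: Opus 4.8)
The plan is to prove this equivalence by directly unwinding the two relevant definitions, since both sides are ultimately statements about the sizes of pairwise intersections of hyperedges. First I would recall that, by construction, the vertices of $\mathcal{L}(\mathcal{H})$ are exactly the hyperedges of $\mathcal{H}$, and that the definition of the line multigraph assigns edges only between \emph{distinct} vertices; moreover, by the definition of a multigraph, every edge is a two-element set. Consequently $\mathcal{L}(\mathcal{H})$ has no loops for any $\mathcal{H}$, and the only way it can fail to be a simple graph is by carrying a multiple edge between some pair of distinct vertices.

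Next I would observe that a multigraph is a simple graph precisely when every pair of distinct vertices is joined by at most one edge, that is, when every edge multiplicity is at most $1$. By the defining property of $\mathcal{L}(\mathcal{H})$, the multiplicity between the vertices corresponding to two distinct hyperedges $e_u, e_v$ equals $|e_u \cap e_v|$. Hence the condition "$\mathcal{L}(\mathcal{H})$ is simple" translates verbatim into the condition $|e_u \cap e_v| \le 1$ for every pair of distinct hyperedges of $\mathcal{H}$.

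From here both implications are immediate. For the forward direction, assuming $\mathcal{L}(\mathcal{H})$ is simple forces each multiplicity $|e_u \cap e_v| \le 1$, which is exactly the statement that any two distinct hyperedges share at most one vertex, i.e.\ that $\mathcal{H}$ is linear. The converse runs identically in reverse: linearity gives $|e_u \cap e_v| \le 1$ for all distinct hyperedges, so every multiplicity in $\mathcal{L}(\mathcal{H})$ is at most $1$, and the line multigraph is simple.

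I do not expect any genuine obstacle here, since the statement is essentially a definitional equivalence; the only point requiring a moment of care is to confirm that no loops can arise, so that "simple graph" reduces cleanly to the single condition "no multiple edges." Once that observation is made, the proof is a short chain of equivalences between the intersection-size condition and the definition of linearity.
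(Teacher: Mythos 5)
Your proof is correct and follows essentially the same route as the paper's: both directions reduce to the identity between edge multiplicities in $\mathcal{L}(\mathcal{H})$ and intersection cardinalities of hyperedges, matched against the definition of linearity. Your additional remark that loops cannot occur (so simplicity reduces to the no-multiple-edges condition) is a small clarification the paper leaves implicit, but the argument is otherwise identical.
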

\begin{proof}
Suppose $\mathcal{L}(\mathcal{H})$ is a simple graph. By definition, any two distinct vertices $v_1, v_2 \in V(\mathcal{L}(\mathcal{H}))$ are joined by at most one edge. Let $e_1, e_2 \in E(\mathcal{H})$ be the hyperedges corresponding to $v_1$ and $v_2$, respectively. Since the number of edges between $v_1$ and $v_2$ in $\mathcal{L}(\mathcal{H})$ is given by $|e_1 \cap e_2|$, it follows that $|e_1 \cap e_2| \leq 1$. Thus, $\mathcal{H}$ is linear.

Conversely, assume $\mathcal{H}$ is linear. By definition, any two distinct hyperedges $e_1, e_2 \in E(\mathcal{H})$ share at most one vertex; that is, $|e_1 \cap e_2| \leq 1$. Let $v_1, v_2 \in V(\mathcal{L}(\mathcal{H}))$ be the vertices in the line multigraph representing $e_1$ and $e_2$. The multiplicity of the edge connecting $v_1$ and $v_2$ is $|e_1 \cap e_2|$, which is at most one. Therefore, $\mathcal{L}(\mathcal{H})$ contains no multiple edges and is, consequently, a simple graph.
\end{proof}

The definitions of the intersection graph (see \cite{line}) and the line multigraph of the hypergraph coincide for linear hypergraphs. More fundamentally, many properties depend only on the existence of adjacencies between vertices, not on edge multiplicities. Consequently, much of the established theory of intersection graphs extends directly to the setting adopted here. For this reason, we focus primarily on characteristics that specifically exploit edge multiplicities.

\section{Structural properties of line multigraphs}\label{sec:structural}

This section studies how the structural properties of a hypergraph are reflected in its line multigraph: which properties transfer unchanged, which are transformed, and how much of the hypergraph can be recovered from its line multigraph. We first study degrees and regularity. We then introduce collars, a family of hypergraphs that will control the extremal spectral behavior in Section \ref{sec:incidence}, and we examine how far the map $\mathcal{H} \mapsto \mathcal{L}(\mathcal{H})$ is from being injective.

\begin{lemma}\label{lem:graulinha}
Let $\mathcal{H}$ be a hypergraph and $\mathcal{L}(\mathcal{H})$ be its line multigraph. If $u \in V(\mathcal{L}(\mathcal{H}))$ is the vertex corresponding to the hyperedge $e_u \in E(\mathcal{H})$, then:
	$$d_{\mathcal{L}}(u) = \left( \sum_{v \in e_u} d_{\mathcal{H}}(v) \right) - |e_u|.$$
\end{lemma}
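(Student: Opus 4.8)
The plan is to compute $d_{\mathcal{L}}(u)$ directly from the definition of vertex degree in a multigraph and then reorganize the resulting sum by a double-counting argument. Recall that in a multigraph the degree counts incident edges \emph{with multiplicity}, and that by definition of $\mathcal{L}(\mathcal{H})$ the multiplicity between $u$ (corresponding to $e$) and any other vertex $w$ (corresponding to a hyperedge $f$) equals $|e \cap f|$. Thus the first step is simply to write
\begin{equation*}
d_{\mathcal{L}}(u) = \sum_{f \in E(\mathcal{H}) \setminus \{e\}} |e \cap f|.
\end{equation*}

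The next step is to expand each intersection cardinality as a sum of indicators over the vertices of $e$, writing $|e \cap f| = \sum_{v \in e} \mathbf{1}[v \in f]$, and then to interchange the order of summation so that the outer sum ranges over $v \in e$ and the inner sum over the hyperedges $f \neq e$. For a fixed $v \in e$, the inner sum $\sum_{f \neq e} \mathbf{1}[v \in f]$ counts exactly the hyperedges \emph{other than $e$} that contain $v$; since $e$ itself is one of the $d_{\mathcal{H}}(v)$ hyperedges through $v$, this count equals $d_{\mathcal{H}}(v) - 1$. Substituting and simplifying gives
\begin{equation*}
d_{\mathcal{L}}(u) = \sum_{v \in e}\bigl(d_{\mathcal{H}}(v) - 1\bigr) = \left(\sum_{v \in e} d_{\mathcal{H}}(v)\right) - |e|,
\end{equation*}
which is the claimed identity.

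The argument is essentially a clean application of Fubini-style summation interchange, so there is no deep obstacle; the only point requiring care is the bookkeeping in the inner count, namely remembering to subtract the contribution of $e$ itself (so that one obtains $d_{\mathcal{H}}(v) - 1$ rather than $d_{\mathcal{H}}(v)$), which is precisely what produces the correction term $-|e|$. I would also note that the restriction to simple hypergraphs guarantees there are no loops to account for, so every hyperedge $f \neq e$ contributes cleanly, and the formula holds verbatim.
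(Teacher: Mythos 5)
Your proof is correct and follows essentially the same route as the paper's: both arguments count, for each vertex $v \in e$, the $d_{\mathcal{H}}(v) - 1$ hyperedges other than $e$ containing $v$, and sum over $v \in e$ to get the degree of $u$. The only difference is presentational --- you start from the sum over hyperedges $f \neq e$ and make the Fubini-style interchange explicit, whereas the paper groups the incident edges by their generating vertex $v$ from the outset.
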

\begin{proof}
For each vertex $v \in e_u$, there are exactly $d_{\mathcal{H}}(v) - 1$ hyperedges, other than $e_u$, that contain $v$. By the construction of the line multigraph $\mathcal{L}(\mathcal{H})$, each of these hyperedges corresponds to a vertex adjacent to $u$ via an edge generated by the shared vertex $v$. By summing the contributions of every vertex $v \in e_u$, we conclude that the degree of the vertex $u \in V(\mathcal{L}(\mathcal{H}))$ is:
$d_{\mathcal{L}}(u) = \sum_{v \in e_u} (d_{\mathcal{H}}(v) - 1) = \left( \sum_{v \in e_u} d_{\mathcal{H}}(v) \right) - |e_u|.$
\end{proof}

\begin{definition}\label{def:regular}
A hypergraph $\mathcal{H}$ is \textit{regular} if there exists a constant $d$ such that $d_{\mathcal{H}}(v) = d$ for all $v \in V(\mathcal{H})$.
\end{definition}

The concept of edge-regularity was introduced as an edge-based analogue to traditional vertex regularity \cite{kaue-sign-vector}. A hypergraph $\mathcal{H}$ is said to be \textit{edge-regular} if for every hyperedge $e \in E(\mathcal{H})$, the sum of the degrees of its incident vertices is invariant. To extend the utility of this property to non-uniform structures, we propose the following generalization:

\begin{definition}\label{def:skew_regular}
A hypergraph $\mathcal{H}$ is \textit{skew edge-regular} if there exists a constant $D$ such that:
$$\sum_{v \in e} (d_{\mathcal{H}}(v) - 1) = D, \quad \forall e \in E(\mathcal{H}).$$
\end{definition}

This modification stems from analyzing incidence patterns in general hypergraphs. The sum of $(d(v) - 1)$ normalizes the contribution of each hyperedge based on its intersection potential, eliminating distortions caused by degree-one vertices. The following lemma demonstrates that for uniform hypergraphs, these two definitions are perfectly consistent.

\begin{lemma}\label{lem:uniform_equiv}
Let $\mathcal{H}$ be a $k$-uniform hypergraph. Then $\mathcal{H}$ is edge-regular if and only if $\mathcal{H}$ is skew edge-regular.
\end{lemma}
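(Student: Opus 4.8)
The plan is to exploit the defining constraint of $k$-uniformity, namely that $|e| = k$ for every hyperedge $e \in E(\mathcal{H})$, to convert one edge-based invariant directly into the other. The key algebraic observation is that for any hyperedge $e$, the two quantities appearing in the respective definitions differ only by the additive constant $|e| = k$, since
\begin{equation}\label{eq:splitsum}
\sum_{v \in e} (d_{\mathcal{H}}(v) - 1) = \left( \sum_{v \in e} d_{\mathcal{H}}(v) \right) - |e| = \left( \sum_{v \in e} d_{\mathcal{H}}(v) \right) - k.
\end{equation}
The entire argument hinges on this identity, which is precisely the computation already carried out in the proof of Lemma \ref{lem:graulinha}.

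From \eqref{eq:splitsum} the equivalence follows by a direct two-way implication. First I would assume $\mathcal{H}$ is edge-regular, so that $\sum_{v \in e} d_{\mathcal{H}}(v) = S$ is constant over all $e \in E(\mathcal{H})$; then the right-hand side of \eqref{eq:splitsum} equals $S - k$, which is again a constant independent of $e$, so setting $D = S - k$ shows $\mathcal{H}$ is skew edge-regular. Conversely, if $\mathcal{H}$ is skew edge-regular with constant $D$, then \eqref{eq:splitsum} gives $\sum_{v \in e} d_{\mathcal{H}}(v) = D + k$ for every $e$, which is constant, establishing edge-regularity. The value $k$ being a fixed global constant (rather than varying with $e$) is exactly what makes the two invariants differ by a constant offset, and hence makes the two regularity conditions equivalent.

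There is no serious obstacle here; the result is essentially a restatement made possible by uniformity. The only point requiring care is to make explicit where uniformity is used: in a general (non-uniform) hypergraph the term $|e|$ varies from edge to edge, so subtracting it can turn a constant sum into a non-constant one (or vice versa), and the two notions genuinely diverge. I would therefore emphasize in the proof that the step $|e| = k$ in \eqref{eq:splitsum} is the sole place the hypothesis enters, thereby clarifying why Definition \ref{def:skew_regular} is a strict generalization needed only for the non-uniform setting.
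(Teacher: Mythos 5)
Your proof is correct and uses exactly the same argument as the paper: the identity $\sum_{v \in e}(d_{\mathcal{H}}(v)-1) = \sum_{v \in e} d_{\mathcal{H}}(v) - k$, valid by $k$-uniformity, shows the two quantities differ by the global constant $k$, so one is invariant across hyperedges if and only if the other is. Your version merely spells out the two implications explicitly, which the paper compresses into a single sentence.
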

\begin{proof}
Since $\mathcal{H}$ is $k$-uniform, $|e| = k$ for every $e \in E(\mathcal{H})$. The result follows immediately from the identity $\sum_{v \in e} (d_{\mathcal{H}}(v) - 1) = \sum_{v \in e} d_{\mathcal{H}}(v) - k$. Since $k$ is constant, the sum of degrees $\sum d_{\mathcal{H}}(v)$ is invariant across all hyperedges if and only if the sum $\sum (d_{\mathcal{H}}(v) - 1)$ is also invariant.
\end{proof}

\begin{example}
For graphs (2-uniform hypergraphs), edge-regularity and skew edge-regularity coincide (Lemma \ref{lem:uniform_equiv}), and both are equivalent to the graph being either regular or semi-regular bipartite (Example 4.10 in \cite{kaue-sign-vector}). These are precisely the 2-uniform structures with constant degree sums across edges.
\end{example}

\begin{theorem}\label{teo:arestareg}
Let $\mathcal{H}$ be a hypergraph. The line multigraph $\mathcal{L}(\mathcal{H})$ is regular if and only if $\mathcal{H}$ is skew edge-regular.
\end{theorem}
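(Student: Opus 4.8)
The plan is to reduce the statement directly to the degree formula established in Lemma \ref{lem:graulinha}. The key observation is that the vertices of $\mathcal{L}(\mathcal{H})$ are, by definition, in bijection with the hyperedges of $\mathcal{H}$. Hence asking for $\mathcal{L}(\mathcal{H})$ to be regular is the same as asking for $d_{\mathcal{L}}(u)$ to be constant as $u$ ranges over $V(\mathcal{L}(\mathcal{H}))$, equivalently as the corresponding hyperedge $e$ ranges over $E(\mathcal{H})$.

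First I would invoke Lemma \ref{lem:graulinha}, which gives $d_{\mathcal{L}}(u) = \left( \sum_{v \in e} d_{\mathcal{H}}(v) \right) - |e|$ for the vertex $u$ corresponding to $e$. Writing $|e| = \sum_{v \in e} 1$ and distributing the subtraction across the sum yields the cleaner expression $d_{\mathcal{L}}(u) = \sum_{v \in e} \left( d_{\mathcal{H}}(v) - 1 \right)$, which is exactly the quantity appearing in Definition \ref{def:skew_regular}. This algebraic rewriting is the only manipulation needed, and it is the step that makes both implications transparent.

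With this identity in hand, both directions follow immediately. If $\mathcal{H}$ is skew edge-regular, then by Definition \ref{def:skew_regular} there is a constant $D$ with $\sum_{v \in e} (d_{\mathcal{H}}(v) - 1) = D$ for every $e$, so every vertex of $\mathcal{L}(\mathcal{H})$ has degree $D$ and the line multigraph is $D$-regular. Conversely, if $\mathcal{L}(\mathcal{H})$ is $d$-regular, then $\sum_{v \in e} (d_{\mathcal{H}}(v) - 1) = d$ for every hyperedge $e$, using once more the bijection between hyperedges and vertices, which is precisely the skew edge-regularity condition. The only point requiring a little care is that the multigraph degree counts edges with multiplicity; one should confirm that Lemma \ref{lem:graulinha} already accounts for this, which it does, since a vertex $v$ shared by several hyperedges contributes one adjacency per incident hyperedge. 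Given this, there is no substantial obstacle: the theorem is essentially a restatement of Lemma \ref{lem:graulinha} through the lens of Definition \ref{def:skew_regular}.
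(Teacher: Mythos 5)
Your proof is correct and follows essentially the same route as the paper's: both invoke Lemma \ref{lem:graulinha}, rewrite the line-multigraph degree as $\sum_{v \in e} \left( d_{\mathcal{H}}(v) - 1 \right)$, and observe via the vertex--hyperedge bijection that constancy of this quantity is exactly Definition \ref{def:skew_regular}. The extra remark about multiplicities being handled by Lemma \ref{lem:graulinha} is a sensible sanity check but adds nothing beyond what the paper's argument already contains.
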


\begin{proof}
By Lemma \ref{lem:graulinha}, the degree of any vertex $u \in V(\mathcal{L}(\mathcal{H}))$ is given by $d_{\mathcal{L}}(u) = \sum_{v \in e} (d_{\mathcal{H}}(v) - 1)$, where $e$ is the hyperedge corresponding to $u$. It follows that $d_{\mathcal{L}}(u)$ is a constant for all vertices in the line multigraph if and only if the sum $\sum_{v \in e} (d_{\mathcal{H}}(v) - 1)$ is invariant across all hyperedges in $\mathcal{H}$. By Definition \ref{def:skew_regular}, this is precisely the condition for $\mathcal{H}$ to be skew edge-regular.
\end{proof}

\begin{remark}
If $\mathcal{H}$ is a $d$-regular hypergraph, the degree of a vertex $u \in V(\mathcal{L}(\mathcal{H}))$ corresponding to the hyperedge $e$ simplifies to $d_{\mathcal{L}}(u) = |e|(d-1)$. This follows directly from Lemma \ref{lem:graulinha} by substituting $d_{\mathcal{H}}(v) = d$ for all $v \in e$. In particular, if $\mathcal{H}$ is both $k$-uniform and $d$-regular, its line multigraph $\mathcal{L}(\mathcal{H})$ is $k(d-1)$-regular.
\end{remark}

\begin{definition}\label{def:edge_coloring}
For a hypergraph $\mathcal{H}$, a mapping $f:E(\mathcal{H})\rightarrow\{1,\dots,p\}$ is an \textit{edge $p$-coloring} if, for every pair of adjacent hyperedges $e_i, e_j \in E(\mathcal{H})$, we have $f(e_i) \neq f(e_j)$.
\end{definition}

\begin{definition}\label{def:collar}
A hypergraph $\mathcal{H}$ is a \textit{collar} if it is $2$-regular and admits an edge $2$-coloring $f:E(\mathcal{H})\rightarrow\{1,2\}$. For each hyperedge $e \in E(\mathcal{H})$, if $f(e)=1$, we say $e$ is an \textit{odd edge}; if $f(e)=2$, we say $e$ is an \textit{even edge}. Consequently, every vertex $v \in V(\mathcal{H})$ is incident to exactly one even edge and one odd edge.
\end{definition}

\begin{example}
A connected graph is a collar if and only if it is an even cycle. 
Figure \ref{fig:collar} illustrates a $3$-uniform collar.

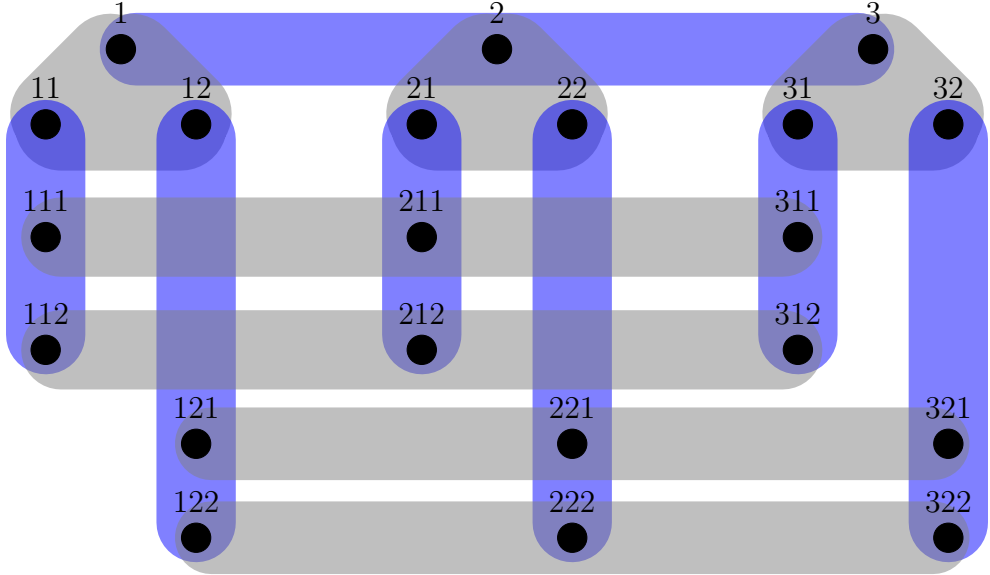
\begin{figure}[h]
	\centering		
	\begin{tikzpicture}
		\node[draw,circle,fill=black,label=above:\(1\)] (1) at (0,-2.5) {};
		\node[draw,circle,fill=black,label=above:\(2\)] (2) at (5,-2.5) {};
		\node[draw,circle,fill=black,label=above:\(3\)] (3) at (10,-2.5) {};
		\node[draw,circle,fill=black,label=above:\(11\)] (11) at (-1,-3.5) {};
		\node[draw,circle,fill=black,label=above:\(12\)] (12) at (1,-3.5) {};
		\node[draw,circle,fill=black,label=above:\(21\)] (21) at (4,-3.5) {};
		\node[draw,circle,fill=black,label=above:\(22\)] (22) at (6,-3.5) {};
		\node[draw,circle,fill=black,label=above:\(31\)] (31) at (9,-3.5) {};
		\node[draw,circle,fill=black,label=above:\(32\)] (32) at (11,-3.5) {};
		\node[draw,circle,fill=black,label=above:\(111\)] (111) at (-1,-5) {};
		\node[draw,circle,fill=black,label=above:\(112\)] (112) at (-1,-6.5) {};
		\node[draw,circle,fill=black,label=above:\(121\)] (121) at (1,-7.75) {};
		\node[draw,circle,fill=black,label=above:\(122\)] (122) at (1,-9) {};
		\node[draw,circle,fill=black,label=above:\(211\)] (211) at (4,-5) {};
		\node[draw,circle,fill=black,label=above:\(212\)] (212) at (4,-6.5) {};
		\node[draw,circle,fill=black,label=above:\(221\)] (221) at (6,-7.75) {};
		\node[draw,circle,fill=black,label=above:\(222\)] (222) at (6,-9) {};
		\node[draw,circle,fill=black,label=above:\(311\)] (311) at (9,-5) {};
		\node[draw,circle,fill=black,label=above:\(312\)] (312) at (9,-6.5) {};
		\node[draw,circle,fill=black,label=above:\(321\)] (321) at (11,-7.75) {};
		\node[draw,circle,fill=black,label=above:\(322\)] (322) at (11,-9) {};
		
		\begin{pgfonlayer}{background}
			\draw[edge,color=blue,line width=27.5pt] (1) -- (3);
			\draw[edge,color=gray,line width=35pt] (1) -- (11) -- (12) -- (1);
			\draw[edge,color=gray,line width=35pt] (2) -- (21) -- (22) -- (2);
			\draw[edge,color=gray,line width=35pt] (3) -- (31) -- (32) -- (3);	
			\draw[edge,color=blue,line width=30pt] (11) -- (112);
			\draw[edge,color=blue,line width=30pt] (12) -- (122);
			\draw[edge,color=blue,line width=30pt] (21) -- (212);
			\draw[edge,color=blue,line width=30pt] (22) -- (222);
			\draw[edge,color=blue,line width=30pt] (31) -- (312);
			\draw[edge,color=blue,line width=30pt] (32) -- (322);	
			\draw[edge,color=gray,line width=30pt] (111) -- (311);
			\draw[edge,color=gray,line width=30pt] (112) -- (312);
			\draw[edge,color=gray,line width=27.5pt] (121) -- (321);
			\draw[edge,color=gray,line width=27.5pt] (122) -- (322);
		\end{pgfonlayer}
		
	\end{tikzpicture}	
		\caption{~A $3$-uniform collar.}\label{fig:collar}
	\end{figure}
\end{example}

\begin{theorem}\label{teo:collar_bipartite}
Let $\mathcal{H}$ be a hypergraph. If $\mathcal{H}$ is a collar, then its line multigraph $\mathcal{L}(\mathcal{H})$ is bipartite.
\end{theorem}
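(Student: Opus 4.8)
The plan is to use the edge $2$-coloring $f$ supplied by the collar structure directly as a bipartition of the vertex set of $\mathcal{L}(\mathcal{H})$. Recall that $V(\mathcal{L}(\mathcal{H})) = E(\mathcal{H})$, so the coloring $f \colon E(\mathcal{H}) \to \{1,2\}$ induces a partition $V(\mathcal{L}(\mathcal{H})) = V_1 \cup V_2$, where $V_1$ consists of the vertices corresponding to the odd edges and $V_2$ of those corresponding to the even edges. I would then show that every edge of $\mathcal{L}(\mathcal{H})$ joins a vertex of $V_1$ to a vertex of $V_2$, which is exactly the condition for a multigraph to be bipartite.

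The key step is to translate adjacency in the line multigraph back into adjacency of hyperedges. By the construction of $\mathcal{L}(\mathcal{H})$, two distinct vertices $e_u, e_v \in V(\mathcal{L}(\mathcal{H}))$ are joined by an edge if and only if their multiplicity $|e_u \cap e_v|$ is at least one, that is, if and only if $e_u$ and $e_v$ are adjacent hyperedges in $\mathcal{H}$. Since a collar admits an edge $2$-coloring, Definition \ref{def:edge_coloring} guarantees that adjacent hyperedges receive distinct colors; hence $f(e_u) \neq f(e_v)$ whenever $e_u$ and $e_v$ are joined in $\mathcal{L}(\mathcal{H})$. Equivalently, any two hyperedges sharing a color are disjoint and therefore contribute no edge to $\mathcal{L}(\mathcal{H})$. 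This shows that $V_1$ and $V_2$ are both independent sets, so $\mathcal{L}(\mathcal{H})$ is bipartite.

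There is essentially no serious obstacle here; the only point deserving care is that bipartiteness of a multigraph concerns the placement of edges, not their multiplicities. A pair $e_u, e_v$ with $f(e_u) \neq f(e_v)$ may be joined by several parallel edges when $|e_u \cap e_v| \geq 2$, but each such edge still runs between $V_1$ and $V_2$, so parallel edges never violate the bipartite condition. I would also note that only the edge $2$-coloring component of the collar hypothesis is used in this argument; the $2$-regularity is not required for bipartiteness itself, although it is what ensures, via Definition \ref{def:collar}, that the coloring assigns each vertex exactly one odd and one even edge.
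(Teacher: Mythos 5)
Your proof is correct and follows essentially the same route as the paper: partition $V(\mathcal{L}(\mathcal{H})) = E(\mathcal{H})$ by the collar's edge $2$-coloring, then use the fact that adjacency in $\mathcal{L}(\mathcal{H})$ corresponds to hyperedge adjacency in $\mathcal{H}$ to conclude that each color class is an independent set. Your added observations---that edge multiplicities are irrelevant to bipartiteness and that only the $2$-coloring (not the $2$-regularity) is needed---are accurate refinements of the same argument, not a different one.
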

\begin{proof}
By the definition of a collar, there exists an edge $2$-coloring $f: E(\mathcal{H}) \to \{1, 2\}$. Let $U$ and $W$ be the sets of odd and even hyperedges, respectively. Since $f$ is an edge $2$-coloring, no two hyperedges within $U$ (or $W$) are adjacent in $\mathcal{H}$. As the vertex set of $\mathcal{L}(\mathcal{H})$ corresponds to $E(\mathcal{H})$ and adjacency is preserved, there are no edges within $U$ or $W$ in the line multigraph. Thus, $\mathcal{L}(\mathcal{H})$ is bipartite.
\end{proof}

\begin{remark}\label{rem:collar_uniform}
In particular, if $\mathcal{H}$ is a $k$-uniform collar, then $\mathcal{L}(\mathcal{H})$ is both bipartite and $k$-regular. This regularity is a consequence of the collar being $2$-regular by definition; substituting $|e|=k$ and $d=2$ into the degree formula $d_{\mathcal{L}}(u) = |e|(d-1)$, we obtain $d_{\mathcal{L}}(u) = k(2-1) = k$ for every vertex $u$.
\end{remark}

\begin{proposition}\label{prop:all_multigraphs}
Let $\mathcal{G}$ be a multigraph with maximum degree $\Delta$ in which every vertex has at least two distinct neighbors. Then there exists a hypergraph $\mathcal{H}$ with rank $r = \Delta$ such that $\mathcal{L}(\mathcal{H}) = \mathcal{G}$.
\end{proposition}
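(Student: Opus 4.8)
The plan is to realize $\mathcal{G}$ as a line multigraph through the incidence (dual) construction, in which the \emph{edges} of $\mathcal{G}$ become the shared vertices of $\mathcal{H}$ while the \emph{vertices} of $\mathcal{G}$ become its hyperedges. Concretely, I would take the ground set of $\mathcal{H}$ to be the edge multiset $E(\mathcal{G})$ together with a supply of auxiliary \emph{private} vertices, and for each $u \in V(\mathcal{G})$ define the hyperedge
$$e_u = \{\varepsilon \in E(\mathcal{G}) : u \in \varepsilon\} \cup P_u,$$
where $P_u$ is a set of $\Delta - d_{\mathcal{G}}(u)$ fresh vertices used only in $e_u$. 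The assignment $u \mapsto e_u$ is then the intended bijection $V(\mathcal{G}) \to E(\mathcal{H}) = V(\mathcal{L}(\mathcal{H}))$.

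Next I would verify the two defining requirements. For the adjacency structure, observe that a private vertex lies in a single hyperedge and so never contributes to an intersection; hence for distinct $u,v$ we have $|e_u \cap e_v| = |\{\varepsilon : u,v \in \varepsilon\}|$, which is exactly the number of edges joining $u$ and $v$ in $\mathcal{G}$, i.e. the multiplicity of $uv$. This gives $\mathcal{L}(\mathcal{H}) = \mathcal{G}$. For the rank, the padding forces $|e_u| = d_{\mathcal{G}}(u) + (\Delta - d_{\mathcal{G}}(u)) = \Delta$ for every $u$, so every hyperedge has cardinality $\Delta$ and therefore $\mathcal{H}$ has rank $\Delta$.

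The hard part will be checking that $\mathcal{H}$ is a \emph{simple} hypergraph, as this is the only step where the construction can break. Giving every hyperedge the common cardinality $\Delta$ immediately rules out cardinality-one hyperedges (provided $\Delta \ge 2$) and, since equal-sized sets cannot be properly contained in one another, it reduces the no-containment condition to pairwise distinctness of the $e_u$. Distinctness is automatic whenever at least one of two hyperedges carries a private vertex, so the only failure mode is when two vertices $u \ne v$ both attain degree $\Delta$ and satisfy $e_u = e_v$; unwinding this forces every edge incident to $u$ (and to $v$) to join $u$ to $v$, i.e. $\mathcal{G}$ is a single $\Delta$-fold edge on two vertices.

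I would therefore handle this isolated configuration separately. A short direct argument shows that when $\mathcal{G}$ consists of two vertices joined by $\Delta$ parallel edges, two distinct hyperedges with intersection of size $\Delta$ force at least one hyperedge of cardinality $\Delta+1$, so rank exactly $\Delta$ is genuinely unattainable by a simple hypergraph; this degenerate case must be excluded (or absorbed into the convention on the admissible multigraphs). In every remaining case the padded construction above is simple and realizes $\mathcal{L}(\mathcal{H}) = \mathcal{G}$ with rank $\Delta$, completing the argument. The simplicity verification — rather than the adjacency or rank computation — is thus the crux of the proof.
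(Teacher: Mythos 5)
Your construction is correct and rests on the same core idea as the paper's proof: the dual construction in which $V(\mathcal{H})=E(\mathcal{G})$ and each vertex $u$ of $\mathcal{G}$ becomes the hyperedge $e_u$ of its incident edges, so that $|e_u\cap e_v|$ reproduces the multiplicity of $uv$. The difference is what happens after that. The paper stops there: it takes $e_u$ to be exactly the set of incident edges (no padding), gets $|e_u|=d_{\mathcal{G}}(u)$ and hence rank $\Delta$, and never checks that $\mathcal{H}$ is simple, even though the paper's own conventions require all hypergraphs to be simple. Your padded version, with $|e_u|=\Delta$ for every $u$, is genuinely more careful on exactly this point: the bare construction produces a cardinality-one hyperedge whenever $\mathcal{G}$ has a degree-one vertex, and a containment $e_u\subseteq e_v$ whenever all edges at $u$ go to a single vertex $v$ (for $\mathcal{G}=P_3$ with vertices $a,b,c$ one gets $e_a=\{ab\}\subseteq e_b$), so the paper's $\mathcal{H}$ can fail to be an admissible object of the paper. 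Your padding repairs this at no cost to the rank or to the intersection pattern, since private vertices never enter any intersection, and it reduces the simplicity check to pairwise distinctness of equal-sized sets. Moreover, the degenerate case you isolate is a real obstruction, not an artifact of your method: if $\mathcal{G}$ is two vertices joined by $\Delta$ parallel edges, any simple $\mathcal{H}$ with $\mathcal{L}(\mathcal{H})=\mathcal{G}$ needs two distinct hyperedges with $|e_1\cap e_2|=\Delta$, and $|e_1|,|e_2|\leq\Delta$ forces $e_1=e_1\cap e_2\subseteq e_2$, contradicting simplicity; so rank $\Delta$ is unattainable and the proposition as stated is false for this family. The paper's proof silently collapses there as well ($e_u=e_v$, so $\mathcal{H}$ has a single hyperedge and $\mathcal{L}(\mathcal{H})$ is one vertex). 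In short: same construction, but your simplicity analysis fills a genuine gap in the paper's argument and correctly identifies the one family of multigraphs that must be excluded from the statement.
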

\begin{proof}
Let $\mathcal{G} = (V, E)$ be a multigraph. We construct the hypergraph $\mathcal{H}$ by defining its vertex set as $V(\mathcal{H}) = E(\mathcal{G})$ and its hyperedge set as $E(\mathcal{H}) = \{e_u : u \in V(\mathcal{G})\}$, where each hyperedge $e_u$ consists of the edges in $\mathcal{G}$ that are incident to vertex $u$. By this construction, the cardinality of each hyperedge is $|e_u| = d_{\mathcal{G}}(u)$, which implies that $\text{rank}(\mathcal{H}) = \Delta(\mathcal{G})$. Moreover, the number of vertices shared between two distinct hyperedges $e_u$ and $e_v$ in $\mathcal{H}$ is, by definition, the number of edges connecting $u$ and $v$ in $\mathcal{G}$. Thus, both the adjacency and the multiplicity in $\mathcal{L}(\mathcal{H})$ perfectly coincide with those of $\mathcal{G}$, yielding $\mathcal{L}(\mathcal{H}) = \mathcal{G}$.

It remains to verify that $\mathcal{H}$ is simple. Since every vertex $u$ has at least two distinct neighbors, we have $|e_u| = d_{\mathcal{G}}(u) \geq 2$, so no hyperedge has cardinality one. Moreover, a containment $e_u \subseteq e_w$ with $u \neq w$ would mean that every edge incident to $u$ is also incident to $w$; that is, $w$ would be the unique neighbor of $u$, contradicting the hypothesis.
\end{proof}

\begin{remark}
Proposition \ref{prop:all_multigraphs} shows that the class of line multigraphs is remarkably rich: every multigraph in which each vertex has at least two distinct neighbors is realizable. This contrasts sharply with the restrictive forbidden-subgraph characterizations of classical line graphs \cite{line-1}, and indicates that the object of study here is the correspondence $\mathcal{H} \leftrightarrow \mathcal{L}(\mathcal{H})$, rather than the class of line multigraphs itself.
\end{remark}

\begin{lemma}\label{lem:remove-vertex}
Let $\mathcal{H}$ be a hypergraph. Suppose there exists a hyperedge $e \in E(\mathcal{H})$ such that $|e| \geq 3$ and there exists a vertex $v \in e$ with $d_{\mathcal{H}}(v) = 1$, and suppose further that $e \setminus \{v\}$ is not contained in any other hyperedge of $\mathcal{H}$. Let $\mathcal{H}'$ be the hypergraph obtained by removing $v$, defined by $V(\mathcal{H}') = V(\mathcal{H}) \setminus \{v\}$ and $E(\mathcal{H}') = (E(\mathcal{H}) \setminus \{e\}) \cup \{e \setminus \{v\}\}$. Then, $\mathcal{L}(\mathcal{H}) = \mathcal{L}(\mathcal{H}')$.
\end{lemma}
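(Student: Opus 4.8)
The plan is to produce an explicit bijection between the vertex sets of $\mathcal{L}(\mathcal{H})$ and $\mathcal{L}(\mathcal{H}')$ and then check that every edge multiplicity is preserved under it. Write $e' = e \setminus \{v\}$. By construction $E(\mathcal{H}')$ is obtained from $E(\mathcal{H})$ by replacing the single hyperedge $e$ with $e'$ while leaving every other hyperedge untouched. Since the vertices of a line multigraph are exactly the hyperedges of the underlying hypergraph, this furnishes a natural correspondence $\varphi\colon V(\mathcal{L}(\mathcal{H})) \to V(\mathcal{L}(\mathcal{H}'))$ that sends the vertex corresponding to $e$ to the vertex corresponding to $e'$ and fixes the vertex corresponding to every other hyperedge.

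The key observation I would exploit is that the hypothesis $d_{\mathcal{H}}(v) = 1$ forces $v$ to belong to no hyperedge other than $e$. Hence, for any hyperedge $e_i \in E(\mathcal{H})$ with $e_i \neq e$, we have $v \notin e_i$, and therefore
$$|e' \cap e_i| = |(e \setminus \{v\}) \cap e_i| = |e \cap e_i|.$$
This single identity does essentially all of the work.

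To finish, I would verify multiplicity preservation in two cases. For a pair of hyperedges $e_i, e_j$ both distinct from $e$, neither is altered in passing to $\mathcal{H}'$, so $|e_i \cap e_j|$ is unchanged and the corresponding multiplicity is identical in both line multigraphs. For a pair involving $e$, the multiplicity between $e$ and $e_i$ in $\mathcal{L}(\mathcal{H})$ equals $|e \cap e_i|$, which by the displayed identity equals $|e' \cap e_i|$, the multiplicity between $\varphi(e) = e'$ and $e_i$ in $\mathcal{L}(\mathcal{H}')$. Since $\varphi$ preserves all adjacencies together with their multiplicities, it identifies the two multigraphs, giving $\mathcal{L}(\mathcal{H}) = \mathcal{L}(\mathcal{H}')$.

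I do not expect a genuine obstacle in the multiplicity bookkeeping, which is immediate once the degree hypothesis is invoked; the one point deserving explicit attention is the role of the condition $|e| \geq 3$. This guarantees $|e'| = |e| - 1 \geq 2$, so that removing $v$ does not create a hyperedge of cardinality one and $\mathcal{H}'$ stays within the class of hypergraphs considered throughout the paper. Without this hypothesis the deletion could produce a singleton hyperedge and push the construction outside the admissible setting, even though the underlying multiplicity computation would remain formally valid.
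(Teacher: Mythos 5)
Your proof is correct and follows essentially the same route as the paper's: identify the vertex sets of the two line multigraphs via the natural correspondence of hyperedges, then use the fact that $d_{\mathcal{H}}(v)=1$ forces $v \notin e_i$ for every $e_i \neq e$, so that $|(e \setminus \{v\}) \cap e_i| = |e \cap e_i|$ and all multiplicities are preserved. Your closing remark on why $|e| \geq 3$ is needed (to keep $\mathcal{H}'$ free of singleton hyperedges) is a worthwhile observation that the paper's proof leaves implicit.
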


\begin{proof}
By definition, the vertex set $V(\mathcal{L}(\mathcal{H}))$ is in bijective correspondence with the edge set $E(\mathcal{H})$. Since the transformation from $\mathcal{H}$ to $\mathcal{H}'$ only modifies the internal composition of the hyperedge $e$ without removing the edge itself, $|E(\mathcal{H})| = |E(\mathcal{H}')|$, and we may identify $V(\mathcal{L}(\mathcal{H}))$ with $V(\mathcal{L}(\mathcal{H}'))$.

The multiplicity of the edge connecting any two vertices $u_i, u_j \in V(\mathcal{L}(\mathcal{H}))$ is given by the cardinality of the intersection of their corresponding hyperedges $e_i, e_j \in E(\mathcal{H})$. Because $d_{\mathcal{H}}(v) = 1$, the vertex $v$ belongs exclusively to $e$. Consequently, $v \notin e_j$ for any $e_j \neq e$, which implies that $e \cap e_j = (e \setminus \{v\}) \cap e_j$. Since no other hyperedge intersections are affected by the removal of $v$, all edge multiplicities are preserved. Thus, $\mathcal{L}(\mathcal{H}) = \mathcal{L}(\mathcal{H}')$.

Finally, $\mathcal{H}'$ is simple: the modified hyperedge has cardinality $|e| - 1 \geq 2$ and is not contained in any other hyperedge, while no hyperedge $e_j \neq e$ can be contained in $e \setminus \{v\} \subset e$, since $\mathcal{H}$ is simple.
\end{proof}

\begin{corollary}\label{cor:uniform_representation}
For any hypergraph $\mathcal{H}$ with rank $r$, and for all $k \geq r$, there exists a $k$-uniform hypergraph $\mathcal{H}'$ such that $\mathcal{L}(\mathcal{H}) = \mathcal{L}(\mathcal{H}')$. 
\end{corollary}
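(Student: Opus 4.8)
The plan is to \emph{reverse} the reduction in Lemma \ref{lem:remove-vertex}: instead of stripping degree-one vertices out of large hyperedges, I would pad every deficient hyperedge up to the full rank $r$ by inserting brand-new degree-one vertices. Because such a vertex lies in only one hyperedge, it contributes nothing to any intersection, so every insertion leaves the line multigraph unchanged—this is exactly Lemma \ref{lem:remove-vertex} read from right to left.

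First I would dispose of the trivial case. Since $\mathcal{H}$ is simple, every hyperedge has cardinality at least $2$, so $r \geq 2$. If $r = 2$, then every hyperedge has cardinality exactly $2$ and $\mathcal{H}$ is already $2$-uniform, so we may take $\mathcal{H}' = \mathcal{H}$. Henceforth assume $r \geq 3$.

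Next I would construct $\mathcal{H}'$ explicitly. For each hyperedge $e \in E(\mathcal{H})$ with $|e| < r$, introduce $r - |e|$ new vertices belonging neither to $V(\mathcal{H})$ nor to any other hyperedge, and replace $e$ by the enlarged hyperedge of cardinality $r$. Let $\mathcal{H}'$ be the resulting hypergraph; by construction it is $r$-uniform. I would also verify that $\mathcal{H}'$ is still simple: no hyperedge has cardinality one (the cardinalities only increase), and no containment can arise, since each added vertex is private to a single hyperedge and therefore cannot be absorbed into another.

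Finally I would establish $\mathcal{L}(\mathcal{H}) = \mathcal{L}(\mathcal{H}')$ by recovering $\mathcal{H}$ from $\mathcal{H}'$ through successive applications of Lemma \ref{lem:remove-vertex}. Removing the inserted vertices one at a time, each is a degree-one vertex sitting in a hyperedge whose current cardinality ranges between $s+1$ and $r$, where $s = |e| \geq 2$ is the original size; in particular every such cardinality is at least $3$, so the hypotheses of Lemma \ref{lem:remove-vertex} are satisfied at each step. Each removal preserves the line multigraph, and after all insertions are undone we are back at $\mathcal{H}$, yielding $\mathcal{L}(\mathcal{H}') = \mathcal{L}(\mathcal{H})$. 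The only point requiring care is precisely this bookkeeping—ensuring that every intermediate hyperedge retains cardinality at least $3$ so that the lemma applies—but this is immediate from $s \geq 2$, which makes the argument essentially an iterated invocation of the preceding lemma rather than a new construction.
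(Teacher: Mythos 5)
Your proposal is correct and follows essentially the same approach as the paper's proof: pad every hyperedge of cardinality less than $r$ with new degree-one vertices and invoke Lemma \ref{lem:remove-vertex} (read in reverse) to conclude that the line multigraph is unchanged. Your additional bookkeeping---the $r=2$ base case, the check that $\mathcal{H}'$ remains simple, and the verification that every intermediate hyperedge has cardinality at least $3$ so the lemma applies at each removal step---is more explicit than the paper's one-paragraph argument, but it is the same construction.
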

\begin{proof}
We construct $\mathcal{H}'$ from $\mathcal{H}$ by expanding each hyperedge to cardinality $k$. For every hyperedge $e \in E(\mathcal{H})$, we add $k - |e|$ new distinct vertices to $e$, defining each of these new vertices to have degree one. Reading Lemma \ref{lem:remove-vertex} in the opposite direction, a degree-one vertex belongs to no hyperedge other than its own, so it contributes to no intersection $e_i \cap e_j$ with $i \neq j$; consequently, the resulting line multigraph remains invariant. Thus, $\mathcal{H}'$ is $k$-uniform and satisfies $\mathcal{L}(\mathcal{H}) = \mathcal{L}(\mathcal{H}')$.
\end{proof}

\begin{remark}\label{rem:redundancy}
The invariance of $\mathcal{L}(\mathcal{H})$ under Lemma \ref{lem:remove-vertex} works in both directions. Removing a degree-one vertex whenever possible (\textit{reduction}) simplifies a hypergraph without changing its line multigraph. In the opposite direction, adding degree-one vertices can be repeated indefinitely: adjoining $p$ new degree-one vertices to any fixed hyperedge $e \in E(\mathcal{H})$ leaves $\mathcal{L}(\mathcal{H})$ unchanged for every $p \in \mathbb{N}$, so every hypergraph already shares its line multigraph with infinitely many others. Padding also allows any hypergraph to be represented as $k$-uniform for every $k$ at least as large as its rank (\textit{uniformization}); this is the mechanism behind the reductions of Section \ref{sec:App}.
\end{remark}

We now describe this redundancy through an equivalence relation on hypergraphs, and we ask how much of it is explained by Lemma \ref{lem:remove-vertex}.

\begin{definition}\label{def:line_equiv}
Two hypergraphs $\mathcal{H}_1$ and $\mathcal{H}_2$ are \textit{line-equivalent}, written $\mathcal{H}_1 \sim \mathcal{H}_2$, if there exists a bijection $\varphi: E(\mathcal{H}_1) \to E(\mathcal{H}_2)$ such that $|e \cap f| = |\varphi(e) \cap \varphi(f)|$ for all $e, f \in E(\mathcal{H}_1)$. Equivalently, $\varphi$ extends to an isomorphism of the multigraphs $\mathcal{L}(\mathcal{H}_1)$ and $\mathcal{L}(\mathcal{H}_2)$.
\end{definition}

Reflexivity and symmetry are immediate, and transitivity follows by composing the corresponding bijections; thus $\sim$ is an equivalence relation on hypergraphs. By Remark \ref{rem:redundancy}, every equivalence class containing at least one hyperedge is infinite, and by Corollary \ref{cor:uniform_representation} it contains a uniform representative of every rank $k$ at least as large as the rank of any of its members. This raises the natural question of what happens in the other direction: how small can a representative of a given class be made, and is there a canonical smallest one?

\begin{definition}\label{def:reduced}
A hypergraph $\mathcal{H}$ is \textit{reduced} if no hyperedge $e \in E(\mathcal{H})$ with $|e| \geq 3$ admits a vertex $v \in e$ with $d_{\mathcal{H}}(v) = 1$ such that $e \setminus \{v\}$ is not contained in any other hyperedge of $\mathcal{H}$; that is, if no application of Lemma \ref{lem:remove-vertex} is possible on $\mathcal{H}$.
\end{definition}

\begin{theorem}\label{teo:reduced_exists}
Every hypergraph $\mathcal{H}$ is line-equivalent to a reduced hypergraph.
\end{theorem}
\begin{proof}
If $\mathcal{H}$ is not reduced, Lemma \ref{lem:remove-vertex} produces a hypergraph $\mathcal{H}'$ with $\mathcal{L}(\mathcal{H}) = \mathcal{L}(\mathcal{H}')$ and $|V(\mathcal{H}')| = |V(\mathcal{H})| - 1$. Iterating, we obtain a sequence of hypergraphs with strictly decreasing, non-negative vertex count, all line-equivalent to $\mathcal{H}$ via the natural bijection that sends the modified hyperedge $e$ to $e \setminus \{v\}$ and fixes all others; the sequence must therefore terminate after finitely many steps, at which point the resulting hypergraph is reduced by construction.
\end{proof}

Theorem \ref{teo:reduced_exists} guarantees that every equivalence class contains a representative that cannot be simplified further by Lemma \ref{lem:remove-vertex}. The theorem does not say whether this representative is unique up to isomorphism, nor whether it depends on the order in which the reductions are performed. Indeed, removing a degree-one vertex from one hyperedge can change which hyperedges are contained in which, and this may block or allow a later reduction elsewhere. This question is the hypergraph counterpart of Whitney's isomorphism theorem for line graphs \cite{line-1932}, which states that two connected simple graphs with isomorphic line graphs are themselves isomorphic, with a single exceptional pair (the triangle $K_3$ and the star $K_{1,3}$).

\section{Spectral properties of line multigraphs}\label{sec:incidence}

This section studies the spectrum of the line multigraph through its relationship with the incidence matrix of the underlying hypergraph. Our results generalize classical properties of the eigenvalues of line graphs \cite{line-spec}, and they complement the decomposition-based lower bounds of Cioabă, Elzinga and Gregory \cite{smallest-eigenvalue} and of Knox and Mohar \cite{knox-mohar}.

\begin{definition}
Let $\mathcal{H} = (V, E)$ be a hypergraph. The \textit{incidence matrix} of $\mathcal{H}$, denoted by $\mathbf{B}(\mathcal{H})$, is a $|V| \times |E|$ matrix with entries $b_{ve}$ defined as:
$$b_{ve} = 
\begin{cases} 
1, & \text{if } v \in e, \\ 
0, & \text{otherwise}. 
\end{cases}$$ 

\noindent The \textit{cardinality matrix} $\mathbf{C}(\mathcal{H})$ is the $|E| \times |E|$ diagonal matrix where the $i$-th diagonal entry is given by $c_{ii} = |e_i|$, representing the cardinality of the hyperedge $e_i \in E$. 
\end{definition}

\begin{definition}
Let $\mathcal{G} = (V, E)$ be a multigraph. Its \textit{adjacency matrix} $\mathbf{A}(\mathcal{G})$ is a square matrix of order $|V|$ where the diagonal entries are $a_{vv} = 0$. For $v \neq u$, the entry $a_{vu}$ denotes the number of edges incident to both vertices $u$ and $v$ in $\mathcal{G}$.
\end{definition}

\begin{theorem}\label{teo:multigrafo}
Let $\mathcal{H}$ be a hypergraph with incidence matrix $\mathbf{B}$ and cardinality matrix $\mathbf{C}$. If $\mathbf{A}_{\mathcal{L}}$ is the adjacency matrix of its line multigraph $\mathcal{L}(\mathcal{H})$, then:
\begin{equation*}
\mathbf{B}^\mathrm{T}\mathbf{B} = \mathbf{C} + \mathbf{A}_{\mathcal{L}}.
\end{equation*}
\end{theorem}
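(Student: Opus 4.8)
The plan is to verify this matrix identity entrywise, since both sides are $|E| \times |E|$ matrices indexed by the hyperedges of $\mathcal{H}$. First I would fix two indices $i, j$ corresponding to hyperedges $e_i, e_j \in E(\mathcal{H})$ and compute the $(i,j)$ entry of $\mathbf{B}^\mathrm{T}\mathbf{B}$ directly from the definition of matrix multiplication. By the product rule, this entry equals $\sum_{v \in V(\mathcal{H})} b_{v e_i} b_{v e_j}$, and since each factor $b_{ve}$ is the indicator of the incidence $v \in e$, the product $b_{v e_i} b_{v e_j}$ equals $1$ exactly when $v$ lies in both $e_i$ and $e_j$. Hence the sum counts the vertices common to the two hyperedges, giving $(\mathbf{B}^\mathrm{T}\mathbf{B})_{ij} = |e_i \cap e_j|$.

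The second step is to split into the diagonal and off-diagonal cases and match each against the right-hand side. On the diagonal, $i = j$ yields $|e_i \cap e_i| = |e_i|$, which is precisely the entry $c_{ii}$ of the cardinality matrix $\mathbf{C}$; the adjacency matrix $\mathbf{A}_{\mathcal{L}}$ contributes nothing here, as its diagonal is zero by definition. Off the diagonal, $i \neq j$ yields $|e_i \cap e_j|$, which by the defining property of the line multigraph is exactly the multiplicity of the edge joining the vertices of $\mathcal{L}(\mathcal{H})$ associated with $e_i$ and $e_j$, that is, the $(i,j)$ entry of $\mathbf{A}_{\mathcal{L}}$; here $\mathbf{C}$, being diagonal, contributes nothing. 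Combining the two cases, every entry of $\mathbf{B}^\mathrm{T}\mathbf{B}$ agrees with the corresponding entry of $\mathbf{C} + \mathbf{A}_{\mathcal{L}}$, which establishes the identity.

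Since the whole argument reduces to a single entrywise computation, I do not expect any genuine obstacle. The only point requiring care is the correct alignment of conventions: one must confirm that the adjacency matrix of a multigraph records edge multiplicities, so that its off-diagonal entries literally equal the intersection cardinalities, and that $\mathbf{C}$ isolates the diagonal contribution. Once these conventions are pinned down, the diagonal/off-diagonal split makes the decomposition of $\mathbf{B}^\mathrm{T}\mathbf{B}$ into $\mathbf{C} + \mathbf{A}_{\mathcal{L}}$ immediate.
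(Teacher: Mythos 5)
Your proof is correct and follows essentially the same route as the paper: compute the entries of $\mathbf{B}^\mathrm{T}\mathbf{B}$ as inner products of incidence (column) vectors, observe they equal $|e_i \cap e_j|$, and split into the diagonal case (yielding $\mathbf{C}$) and the off-diagonal case (yielding $\mathbf{A}_{\mathcal{L}}$). No gaps; the convention checks you flag are exactly the ones the paper's definitions settle.
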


\begin{proof}
Let $\mathbf{M} = \mathbf{B}^\mathrm{T}\mathbf{B}$. By the definition of matrix multiplication, the entry $m_{ij}$ is the inner product of the $i$-th and $j$-th columns of $\mathbf{B}$. Since these columns are the incidence vectors of hyperedges $e_i$ and $e_j$, respectively, $m_{ij}$ counts the number of vertices in the intersection $e_i \cap e_j$. We distinguish two cases:
\begin{itemize}
\item \textbf{Diagonal entries:} The entry $m_{ii}$ represents $|e_i \cap e_i| = |e_i|$. These entries form the diagonal cardinality matrix $\mathbf{C}$.

\item \textbf{Off-diagonal entries ($i \neq j$):} The entry $m_{ij} = |e_i \cap e_j|$. By the construction of the line multigraph, the number of edges between the vertices corresponding to $e_i$ and $e_j$ is precisely the number of shared vertices in $\mathcal{H}$. These entries form the adjacency matrix $\mathbf{A}_{\mathcal{L}}$.
\end{itemize}
Combining these cases, we obtain $\mathbf{M} = \mathbf{C} + \mathbf{A}_{\mathcal{L}}$.
\end{proof}

For $k$-uniform hypergraphs, the cardinality matrix is just a scalar matrix, $\mathbf{C} = k\mathbf{I}$, and Theorem \ref{teo:multigrafo} reduces to the identity established in \cite{kaue-lap}. The cardinality matrix is precisely what replaces this scalar matrix in the general setting.

\begin{proposition}\label{teo:lower_bound}
Let $\mathcal{H}$ be a hypergraph with rank $r$. If $\lambda$ is an eigenvalue of the adjacency matrix $\mathbf{A}_{\mathcal{L}}$ of the line multigraph $\mathcal{L}(\mathcal{H})$, then $\lambda \geq -r$.
\end{proposition}
\begin{proof}

Let $\mathbf{x}$ be a normalized eigenvector of $\mathbf{A}_{\mathcal{L}}$ associated with the eigenvalue $\lambda$. By Theorem \ref{teo:multigrafo}, we have the identity $\mathbf{B}^\mathrm{T}\mathbf{B} = \mathbf{C} + \mathbf{A}_{\mathcal{L}}$. Since $\mathbf{B}^\mathrm{T}\mathbf{B}$ is positive semi-definite, its quadratic form is non-negative:
\begin{equation*}
\mathbf{x}^\mathrm{T} (\mathbf{C} + \mathbf{A}_{\mathcal{L}}) \mathbf{x} \geq 0. 
\end{equation*}
Expanding this expression, we obtain:
\begin{equation*}
\mathbf{x}^\mathrm{T} \mathbf{C} \mathbf{x} + \mathbf{x}^\mathrm{T} \mathbf{A}_{\mathcal{L}} \mathbf{x} \geq 0 \implies \sum_{e_i \in E(\mathcal{H})} |e_i|x_i^2 + \lambda \geq 0.
\end{equation*}
By the definition of rank, $|e_i| \leq r$ for all $e_i \in E(\mathcal{H})$. Since $x_i^2 \geq 0$, it follows that:
\begin{equation*}
\sum_{e_i \in E(\mathcal{H})} |e_i|x_i^2 \leq \sum_{e_i \in E(\mathcal{H})} r x_i^2 = r \sum_{e_i \in E(\mathcal{H})} x_i^2 = r.
\end{equation*}
Substituting this inequality back into the expanded quadratic form, we have $r + \lambda \geq 0$, which implies $\lambda \geq -r$.
\end{proof}

\begin{remark}\label{rem:decomposition}
Proposition \ref{teo:lower_bound} can also be obtained from the clique decomposition method of Cioabă, Elzinga and Gregory \cite[Theorem 2.1 and Corollary 3.1]{smallest-eigenvalue}, see also Knox and Mohar \cite{knox-mohar}. For $k$-uniform hypergraphs, the bound $\lambda \geq -k$ had already been observed in \cite{kaue-lap}, as a direct consequence of the identity of Theorem \ref{teo:multigrafo}. The incidence matrix formulation adopted here has the advantage of connecting the spectrum of $\mathcal{L}(\mathcal{H})$ directly to the signless Laplacian matrix of $\mathcal{H}$, a connection we exploit in Section \ref{sec:App}.
\end{remark}

\begin{lemma}\label{lem:lk}
Let $\mathcal{H}$ be a hypergraph with rank $r$ and $m$ edges. The value $-r$ is an eigenvalue of the adjacency matrix of the line multigraph $\mathcal{L}(\mathcal{H})$ if and only if there exists a non-zero vector $\mathbf{x} \in \mathbb{R}^{m}$ such that:
\begin{itemize}
\item $\mathbf{B}(\mathcal{H})\mathbf{x} = \mathbf{0}$;

\item $x_i = 0$ for every hyperedge $e_i \in E(\mathcal{H})$ with cardinality $|e_i| < r$.
\end{itemize}
\end{lemma}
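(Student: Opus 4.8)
The plan is to establish the biconditional by exploiting the key identity $\mathbf{B}^\mathrm{T}\mathbf{B} = \mathbf{C} + \mathbf{A}_{\mathcal{L}}$ from Theorem \ref{teo:multigrafo}, rewriting the eigenvalue equation for $-r$ in terms of the incidence matrix. The central observation is that $-r$ being an eigenvalue of $\mathbf{A}_{\mathcal{L}}$ means there exists a nonzero $\mathbf{x}$ with $\mathbf{A}_{\mathcal{L}}\mathbf{x} = -r\mathbf{x}$, which by the identity becomes $\mathbf{B}^\mathrm{T}\mathbf{B}\,\mathbf{x} = (\mathbf{C} - r\mathbf{I})\mathbf{x}$. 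Since $r$ is the rank, the diagonal matrix $\mathbf{C} - r\mathbf{I}$ is negative semi-definite, with a zero entry exactly at the coordinates of the full-cardinality hyperedges ($|e_i| = r$) and strictly negative entries elsewhere. This sign structure is what forces the two stated conditions.

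First I would prove the reverse direction, as it is the cleaner one. Assuming the existence of a nonzero $\mathbf{x}$ satisfying both $\mathbf{B}\mathbf{x} = \mathbf{0}$ and $x_i = 0$ whenever $|e_i| < r$, I would compute directly that $(\mathbf{C} + \mathbf{A}_{\mathcal{L}})\mathbf{x} = \mathbf{B}^\mathrm{T}\mathbf{B}\,\mathbf{x} = \mathbf{B}^\mathrm{T}\mathbf{0} = \mathbf{0}$, so that $\mathbf{A}_{\mathcal{L}}\mathbf{x} = -\mathbf{C}\mathbf{x}$. The second condition ensures that $\mathbf{C}\mathbf{x} = r\mathbf{x}$ coordinatewise: on coordinates where $x_i \neq 0$ we must have $|e_i| = r$ (since otherwise $x_i = 0$ by hypothesis), so $c_{ii}x_i = r x_i$, and on coordinates where $x_i = 0$ the equation holds trivially. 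Hence $\mathbf{A}_{\mathcal{L}}\mathbf{x} = -r\mathbf{x}$, showing $-r$ is an eigenvalue.

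For the forward direction, I would start from an eigenvector $\mathbf{x} \neq \mathbf{0}$ with $\mathbf{A}_{\mathcal{L}}\mathbf{x} = -r\mathbf{x}$ and apply the identity to get $\mathbf{B}^\mathrm{T}\mathbf{B}\,\mathbf{x} = (\mathbf{C} - r\mathbf{I})\mathbf{x}$. Taking the quadratic form against $\mathbf{x}$ yields
\begin{equation*}
\|\mathbf{B}\mathbf{x}\|^2 = \mathbf{x}^\mathrm{T}\mathbf{B}^\mathrm{T}\mathbf{B}\,\mathbf{x} = \mathbf{x}^\mathrm{T}(\mathbf{C} - r\mathbf{I})\mathbf{x} = \sum_{e_i \in E(\mathcal{H})} (|e_i| - r)\,x_i^2.
\end{equation*}
The left-hand side is nonnegative, while every summand on the right satisfies $(|e_i| - r) x_i^2 \leq 0$ because $|e_i| \leq r$. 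Therefore both sides must equal zero: this forces $\mathbf{B}\mathbf{x} = \mathbf{0}$ (giving the first condition), and it forces each summand $(|e_i| - r)x_i^2 = 0$ individually, so whenever $|e_i| < r$ we must have $x_i = 0$ (giving the second condition).

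I do not anticipate a serious obstacle; the argument is essentially a squeeze between a positive semi-definite form and a negative semi-definite one, which collapses both to zero. The one point requiring care is arranging the two conditions cleanly so that each is seen to follow from the vanishing of $\|\mathbf{B}\mathbf{x}\|^2 + \sum_i (r - |e_i|)x_i^2 = 0$, a sum of nonnegative terms; I would emphasize that the vanishing of this sum term-by-term is exactly what encodes both conditions simultaneously. A minor bookkeeping check is verifying that the vector used in the reverse direction is genuinely nonzero as an eigenvector, which is immediate since $\mathbf{x} \neq \mathbf{0}$ is assumed.
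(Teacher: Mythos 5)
Your proposal is correct and follows essentially the same route as the paper's own proof: both rest on the identity $\mathbf{B}^\mathrm{T}\mathbf{B} = \mathbf{C} + \mathbf{A}_{\mathcal{L}}$, derive $\|\mathbf{B}\mathbf{x}\|^2 = \sum_{e_i}(|e_i|-r)x_i^2$ and squeeze both sides to zero for the forward direction, and verify the eigenvalue equation directly from the two conditions for the converse. If anything, your write-up of the reverse direction (checking $\mathbf{C}\mathbf{x} = r\mathbf{x}$ coordinatewise) is slightly more explicit than the paper's.
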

\begin{proof}
Consider the identity $\mathbf{B}^\mathrm{T}\mathbf{B} = \mathbf{C} + \mathbf{A}_{\mathcal{L}}$. If $\mathbf{x}$ is an eigenvector of the adjacency matrix $\mathbf{A}_{\mathcal{L}}$ associated with the eigenvalue $-r$, then:
\begin{equation*}
\mathbf{x}^\mathrm{T} \mathbf{B}^\mathrm{T} \mathbf{B} \mathbf{x} = \mathbf{x}^\mathrm{T}(\mathbf{C} + \mathbf{A}_{\mathcal{L}})\mathbf{x}\quad \implies\quad ||\mathbf{B}\mathbf{x}||^2 = \sum_{e_i \in E(\mathcal{H})} (|e_i| - r)x_i^2.
\end{equation*}
Since $|e_i| \leq r$ for all $i$, the right-hand side is a sum of non-positive terms, while the left-hand side is non-negative. Thus, equality holds if and only if both sides vanish independently. Specifically, $\|\mathbf{B}\mathbf{x}\|^2 = 0$ implies $\mathbf{B}\mathbf{x} = \mathbf{0}$, and the summation vanishes if and only if $(|e_i| - r)x_i^2 = 0$ for each $i$, which holds if and only if $x_i = 0$ whenever $|e_i| < r$. Conversely, if these two conditions hold, $\mathbf{B}^\mathrm{T}\mathbf{B}\mathbf{x} = \mathbf{0}$ and $(\mathbf{C} - r\mathbf{I})\mathbf{x} = \mathbf{0}$, which directly implies $\mathbf{A}_{\mathcal{L}}\mathbf{x} = -r\mathbf{x}$.
\end{proof}
\begin{remark} 
For a $k$-uniform hypergraph, the second condition is trivial, since $|e_i| = k$ for all $i$.
\end{remark}

\begin{definition}\label{def:subhypergraph}
Let $\mathcal{H} = (V, E)$ and $\mathcal{H}' = (V', E')$ be two hypergraphs. We say that $\mathcal{H}'$ is a \textit{sub-hypergraph} of $\mathcal{H}$ if $V' \subseteq V$ and $E' \subseteq E$. For $\mathcal{H}'$ to be well-defined, every hyperedge $e \in E'$ must satisfy $e \subseteq V'$.
\end{definition}

\begin{definition}\label{def:r_core}
Let $\mathcal{H} = (V, E)$ be a hypergraph with rank $r$. The \textit{$r$-uniform core} of $\mathcal{H}$, denoted by $\mathcal{H}_{[r]}$, is the sub-hypergraph whose hyperedge set is $E_{[r]} = \{e \in E : |e| = r\}$ and whose vertex set consists of all vertices incident to at least one hyperedge in $E_{[r]}$.
\end{definition}

\begin{theorem}\label{teo:collar_eigenvalue}
Let $\mathcal{H}$ be a hypergraph with rank $r$ and $m$ hyperedges. If $\mathcal{H}$ contains an $r$-uniform collar $\mathcal{C}$ as a sub-hypergraph, then $\lambda = -r$ is an eigenvalue of the adjacency matrix $\mathbf{A}_{\mathcal{L}}$ of its line multigraph.
\end{theorem}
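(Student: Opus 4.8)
The plan is to reduce the statement to the kernel criterion of Lemma \ref{lem:lk} and then to build an explicit null vector of the incidence matrix directly from the edge $2$-coloring of the collar. Since $\mathcal{H}$ is $k$-uniform, its rank is $r = k$ and every hyperedge has cardinality exactly $k$; hence the second condition of Lemma \ref{lem:lk} is vacuous (as noted in the Remark following that lemma). Therefore it suffices to exhibit a non-zero vector $\mathbf{x} \in \mathbb{R}^{m}$, indexed by the hyperedges of $\mathcal{H}$, satisfying $\mathbf{B}(\mathcal{H})\mathbf{x} = \mathbf{0}$.

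First I would define $\mathbf{x}$ using the edge $2$-coloring $f : E(\mathcal{C}) \to \{1,2\}$ guaranteed by the collar structure (Definition \ref{def:collar}). Concretely, set $x_e = +1$ when $e$ is an odd edge of $\mathcal{C}$, $x_e = -1$ when $e$ is an even edge of $\mathcal{C}$, and $x_e = 0$ for every hyperedge $e \in E(\mathcal{H}) \setminus E(\mathcal{C})$. Since the collar is non-empty, $\mathbf{x} \neq \mathbf{0}$. The intuition is that the signed indicator of the two color classes should cancel at every vertex, precisely because each vertex of the collar sees one edge of each color.

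Next I would verify $\mathbf{B}(\mathcal{H})\mathbf{x} = \mathbf{0}$ coordinate by coordinate, noting that the coordinate indexed by a vertex $v$ equals $\sum_{e \ni v} x_e$. I split into two cases according to whether $v \in V(\mathcal{C})$. If $v \in V(\mathcal{C})$, then by the defining property of a collar, $v$ is incident to exactly one odd edge and exactly one even edge of $\mathcal{C}$, contributing $+1 - 1 = 0$; any additional hyperedges of $\mathcal{H}$ through $v$ lie outside $E(\mathcal{C})$ and carry weight $0$. If $v \notin V(\mathcal{C})$, then since $\mathcal{C}$ is a sub-hypergraph every collar hyperedge is contained in $V(\mathcal{C})$ (Definition \ref{def:subhypergraph}), so no collar edge contains $v$ and the sum is again $0$. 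In both cases the coordinate vanishes, giving $\mathbf{B}(\mathcal{H})\mathbf{x} = \mathbf{0}$. Applying Lemma \ref{lem:lk} with $r = k$ then yields that $-k$ is an eigenvalue of $\mathbf{A}_{\mathcal{L}}$.

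I do not anticipate a genuine obstacle here; the coordinate-wise check is routine. The only point that requires care is the bookkeeping for collar vertices that have extra incidences in $\mathcal{H}$ outside the collar, together with the observation that non-collar vertices cannot lie in any collar hyperedge. Both are handled cleanly by the sub-hypergraph condition and by the $2$-regularity combined with the proper $2$-coloring, so the crux is simply recognizing that the signed indicator of the collar's color classes is exactly a kernel vector of $\mathbf{B}(\mathcal{H})$.
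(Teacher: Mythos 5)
Your proposal is correct and follows essentially the same route as the paper's own proof: reduce to Lemma \ref{lem:lk} (with the second condition vacuous by $k$-uniformity), define the signed indicator vector $\pm 1$ on the two color classes of the collar and $0$ elsewhere, and verify $\mathbf{B}\mathbf{x}=\mathbf{0}$ by the same two vertex cases. The only cosmetic difference is that you explicitly invoke Definition \ref{def:subhypergraph} to justify that non-collar vertices lie in no collar hyperedge, a point the paper states without citation.
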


\begin{proof}
To prove that $-r$ is an eigenvalue, by Lemma \ref{lem:lk}, it suffices to exhibit a non-zero vector $\mathbf{x} \in \mathbb{R}^{m}$ with $\mathbf{B}\mathbf{x} = \mathbf{0}$ and $x_i = 0$ for every hyperedge $e_i$ with $|e_i| < r$.

Let $E(\mathcal{C}) \subseteq E(\mathcal{H})$ be the set of hyperedges in the collar sub-hypergraph; since $\mathcal{C}$ is $r$-uniform, every hyperedge in $E(\mathcal{C})$ has cardinality $r$, so $E(\mathcal{C}) \subseteq E_{[r]}$. Since $\mathcal{C}$ is a collar, it admits an edge $2$-coloring $f: E(\mathcal{C}) \to \{1, 2\}$. We define the vector $\mathbf{x} \in \mathbb{R}^m$ as:
$$x_i = \begin{cases} 
\;\;\,1 & \text{if } e_i \in E(\mathcal{C}) \text{ and } f(e_i) = 1, \\ 
-1 & \text{if } e_i \in E(\mathcal{C}) \text{ and } f(e_i) = 2, \\
\;\;\,0 & \text{if } e_i \notin E(\mathcal{C}).
\end{cases}$$
Since $x_i \neq 0$ only for $e_i \in E(\mathcal{C})$, and all such hyperedges have cardinality $r$, the second condition of Lemma \ref{lem:lk} holds.

Now, we examine the entries of the product $\mathbf{B}\mathbf{x}$ for each vertex $v \in V(\mathcal{H})$:
\begin{itemize}
\item \textbf{Case 1 ($v \notin V(\mathcal{C})$):} If the vertex $v$ is not part of the collar, it is not incident to any hyperedge $e_i$ for which $x_i \neq 0$. Thus, $(\mathbf{B}\mathbf{x})_v = 0$.
\item \textbf{Case 2 ($v \in V(\mathcal{C})$):} Since $\mathcal{C}$ is a $2$-regular sub-hypergraph, $v$ is incident to exactly two hyperedges in $E(\mathcal{C})$. By the definition of the edge $2$-coloring of a collar, one of these hyperedges must have color $1$ ($x_i = 1$) and the other must have color $2$ ($x_j = -1$). Consequently, $(\mathbf{B}\mathbf{x})_v = 1 + (-1) = 0.$
\end{itemize}
In both cases, we find that $\mathbf{B}\mathbf{x} = \mathbf{0}$. Since $\mathbf{x}$ is a non-zero vector satisfying both conditions of Lemma \ref{lem:lk}, it follows that $\lambda = -r$ is an eigenvalue of $\mathbf{A}_{\mathcal{L}}$.
\end{proof}

Theorem \ref{teo:collar_eigenvalue} gives a sufficient condition for the lower bound of Proposition \ref{teo:lower_bound} to be attained. We now determine exactly when $-r$ is an eigenvalue of $\mathbf{A}_{\mathcal{L}}$, and with which multiplicity. The idea is to isolate the part of the $r$-uniform core that can support an eigenvector of $-r$. Our construction is the hypergraph counterpart of the reduction to $\mathcal{K}$-essential vertices developed by Cioabă, Elzinga and Gregory \cite{smallest-eigenvalue} for clique partitions.

\begin{definition}\label{def:essential_core}
Let $\mathcal{H}$ be a hypergraph with rank $r$ and $r$-uniform core $\mathcal{H}_{[r]}$. A family $F \subseteq E_{[r]}$ is \textit{doubly covering} if every vertex belonging to a member of $F$ belongs to at least two members of $F$. Since the union of doubly covering families is doubly covering, there exists a unique maximal doubly covering family, denoted by $E^*$. The \textit{essential core} of $\mathcal{H}$ is the sub-hypergraph $\mathcal{H}^* = (V^*, E^*)$, where $V^* = \bigcup_{e \in E^*} e$. The essential core, which may be empty, is a sub-hypergraph of the $r$-uniform core.
\end{definition}

\begin{lemma}\label{lem:peeling}
The family $E^*$ is produced by the following peeling procedure: start from $E_{[r]}$ and repeatedly delete a hyperedge containing a vertex that belongs to no other hyperedge of the current family, until no such hyperedge exists. In particular, the outcome of the procedure does not depend on the order of the deletions.
\end{lemma}
\begin{proof}
Let $F$ be the family produced by a complete run of the procedure. By the stopping rule, every vertex covered by $F$ belongs to at least two members of $F$, so $F$ is doubly covering and, by maximality, $F \subseteq E^*$. Conversely, we claim that no member of $E^*$ is ever deleted. Indeed, suppose inductively that the current family contains $E^*$; this holds at the start, since $E^* \subseteq E_{[r]}$. If $e \in E^*$, then every vertex $v \in e$ belongs to a second hyperedge of $E^*$, which is still present in the current family; hence $e$ is not deletable, and the induction continues. Therefore $E^* \subseteq F$, and $F = E^*$.
\end{proof}

\begin{theorem}\label{teo:multiplicity}
Let $\mathcal{H}$ be a hypergraph with rank $r$, and let $\mathcal{H}^* = (V^*, E^*)$ be its essential core, with incidence matrix $\mathbf{B}^* = \mathbf{B}(\mathcal{H}^*)$. For $\mathbf{y} \in \mathbb{R}^{|E^*|}$, let $\widetilde{\mathbf{y}} \in \mathbb{R}^{|E|}$ denote its \textit{extension by zeros}. Then $\mathbf{y} \mapsto \widetilde{\mathbf{y}}$ is a linear isomorphism from the null space of $\mathbf{B}^*$ onto the eigenspace of $\mathbf{A}_{\mathcal{L}}$ associated with $-r$. In particular, the multiplicity $\mu$ of $-r$ as an eigenvalue of $\mathbf{A}_{\mathcal{L}}$ is given by
\begin{equation*}
\mu = |E^*| - \operatorname{rank}(\mathbf{B}^*),
\end{equation*}
and $-r$ is an eigenvalue of $\mathbf{A}_{\mathcal{L}}$ if and only if the columns of $\mathbf{B}^*$ are linearly dependent.
\end{theorem}
\begin{proof}
As shown in the proof of Lemma \ref{lem:lk}, a vector $\mathbf{x} \in \mathbb{R}^{|E|}$ satisfies $\mathbf{A}_{\mathcal{L}}\mathbf{x} = -r\mathbf{x}$ if and only if $\mathbf{B}\mathbf{x} = \mathbf{0}$ and $x_e = 0$ for every hyperedge $e$ with $|e| < r$. We show that the set of such vectors is exactly the image, under the extension by zeros, of the null space of $\mathbf{B}^*$.

First, let $\mathbf{y} \in \mathbb{R}^{|E^*|}$ satisfy $\mathbf{B}^*\mathbf{y} = \mathbf{0}$, and let $\mathbf{x} \in \mathbb{R}^{|E|}$ be its extension by zeros. Then $x_e = 0$ whenever $|e| < r$, because $E^* \subseteq E_{[r]}$. Moreover, for every vertex $v \in V(\mathcal{H})$, the hyperedges containing $v$ that lie outside $E^*$ contribute nothing to $(\mathbf{B}\mathbf{x})_v$; hence $(\mathbf{B}\mathbf{x})_v = 0$ if $v \notin V^*$, while $(\mathbf{B}\mathbf{x})_v = (\mathbf{B}^*\mathbf{y})_v = 0$ if $v \in V^*$. Thus $\mathbf{B}\mathbf{x} = \mathbf{0}$.

Conversely, let $\mathbf{x}$ satisfy $\mathbf{B}\mathbf{x} = \mathbf{0}$ and $x_e = 0$ for $|e| < r$. We claim that $x_e = 0$ for every hyperedge deleted by the peeling procedure of Lemma \ref{lem:peeling}. We argue by induction on the deletion steps. For the first deletion, the claim is vacuously true, since no hyperedge has been deleted yet; moreover, if $e$ is the first hyperedge deleted because a vertex $v \in e$ belongs to no other member of $E_{[r]}$, then every hyperedge $f \neq e$ containing $v$ already satisfies $|f| < r$, so $x_f = 0$ by hypothesis. For the inductive step, suppose the claim holds for all previously deleted hyperedges, and let $e$ be deleted because some vertex $v \in e$ belongs to no other member of the current family. Every hyperedge $f \neq e$ containing $v$ either has cardinality smaller than $r$, or was deleted earlier; in both cases $x_f = 0$. Therefore $0 = (\mathbf{B}\mathbf{x})_v = x_e$, which proves the claim. Consequently, $\mathbf{x}$ is supported on $E^*$ and, denoting by $\mathbf{y}$ its restriction to $E^*$, the same computation as above gives $(\mathbf{B}^*\mathbf{y})_v = (\mathbf{B}\mathbf{x})_v = 0$ for all $v \in V^*$.

The extension by zeros is injective by construction, and the argument above shows that its image is exactly the eigenspace. The dimension formula then follows from the rank-nullity theorem.
\end{proof}

\begin{corollary}\label{cor:collar_general}

Let $\mathcal{H}$ be a hypergraph with rank $r$, and suppose $\mathcal{H}$ contains a family of $r$-uniform collars $\mathcal{C}_1, \dots, \mathcal{C}_t$ as sub-hypergraphs. For each $i \in \{1,\dots,t\}$, let $\mathbf{x}^{(i)} \in \mathbb{R}^{|E(\mathcal{H})|}$ be the vector equal to $+1$ on the odd hyperedges of $\mathcal{C}_i$, $-1$ on its even hyperedges, and $0$ elsewhere. If $\mathbf{x}^{(1)}, \dots, \mathbf{x}^{(t)}$ are linearly independent, then the multiplicity $\mu$ of $-r$ satisfies $\mu \geq t$. In particular, a single $r$-uniform collar already forces $-r$ to be an eigenvalue of the line multigraph.

\end{corollary}

\begin{proof}
Each set $E(\mathcal{C}_i)$ consists of hyperedges of cardinality $r$ and, since a collar is $2$-regular, every vertex of $\mathcal{C}_i$ belongs to exactly two members of $E(\mathcal{C}_i)$; hence $E(\mathcal{C}_i)$ is doubly covering and $E(\mathcal{C}_i) \subseteq E^*$. In particular, each $\mathbf{x}^{(i)}$ vanishes outside $E^*$; let $\mathbf{y}^{(i)} \in \mathbb{R}^{|E^*|}$ denote its restriction to $E^*$, so that $\mathbf{x}^{(i)} \mapsto  \mathbf{y}^{(i)}$ and the vectors $\mathbf{y}^{(1)}, \dots, \mathbf{y}^{(t)}$ are linearly independent as well. For $v\in V(\mathcal{C}_i)$, the two hyperedges of $\mathcal{C}_i$ containing $v$ have opposite colors, so they contribute $\pm1$ to $(\mathbf{B}^*\mathbf{y}^{(i)})_v$; for any other $v \in V^*$, no hyperedge of $\mathcal{C}_i$ contains $v$, so this entry is $0$. Hence $\mathbf{B}^*\mathbf{y}^{(i)} = \mathbf{0}$ for every $i$. The result now follows from Theorem \ref{teo:multiplicity}.
\end{proof}

\begin{remark}\label{rem:doob}
For a graph $\mathcal{G}$, seen as a $2$-uniform hypergraph, the peeling procedure of Lemma \ref{lem:peeling} repeatedly removes an edge incident to a vertex of degree $1$, that is, a pendant edge, until none remain. Under this reduction, a tree collapses to the empty hypergraph, while a unicyclic graph collapses to its unique cycle. Combining this with Theorem \ref{teo:multiplicity}, the multiplicity $\mu$ of $-2$ vanishes if and only if the essential core $\mathcal{G}^*$ is empty or an odd cycle, that is, if and only if $\mathcal{G}$ is a tree or an odd-unicyclic graph. Thus $\lambda(\mathcal{L}(\mathcal{G})) > -2$ exactly in this case, recovering the classical characterization for line graphs \cite{line-spec}.
\end{remark}

\begin{remark}\label{rem:collar_basis}
In a general hypergraph, collars play the role that even cycles play in a graph: configurations that push the spectrum of the line multigraph down to its smallest possible value. However, collar vectors do not span the eigenspace of $-r$ in general. Already for graphs, the eigenvectors of $-2$ of a line graph come from even cycles and from pairs of odd cycles joined by a path \cite{line-spec}, and these paths are not collars. Theorem \ref{teo:multiplicity} still gives the multiplicity of $-r$, through the rank of the essential core. But a simple combinatorial description of a basis for the eigenspace, one that finds the hypergraph version of even cycles and of pairs of odd cycles joined by a path, remains open; see Section \ref{sec:conclusion}.
\end{remark}

\section{Applications to spectral hypergraph theory}\label{sec:App}

This section applies the theory developed so far to the signless Laplacian matrix $\mathbf{Q}(\mathcal{H}) = \mathbf{B}\mathbf{B}^\mathrm{T}$. Its spectrum is closely related to the spectrum of the line multigraph, because both matrices come from the same incidence matrix $\mathbf{B}$. A detailed treatment of this connection, restricted to uniform hypergraphs, can be found in \cite{kaue-energia,kaue-lap}.

\begin{definition}\label{def:signless_laplacian}
Let $\mathcal{H}$ be a hypergraph with incidence matrix $\mathbf{B}$. The \textit{signless Laplacian matrix} of $\mathcal{H}$ is defined as the $|V| \times |V|$ matrix:$$\mathbf{Q}(\mathcal{H}) = \mathbf{B}\mathbf{B}^\mathrm{T}.$$For an irreducible and non-negative matrix, the \textit{spectral radius} $\rho(\mathcal{H})$ is defined as its largest eigenvalue, as guaranteed by the Perron-Frobenius Theorem.
\end{definition}
\begin{remark}
The entries of $\mathbf{Q}(\mathcal{H}) = (q_{uv})$ have a direct combinatorial interpretation: the diagonal entries $q_{vv}$ are the vertex degrees $d_{\mathcal{H}}(v)$, while the off-diagonal entries $q_{uv}$ represent the number of hyperedges in $\mathcal{H}$ that contain both vertices $u$ and $v$.
\end{remark}

\begin{theorem}\label{teo:raio_espectral}
Let $\mathcal{H}$ be a hypergraph with rank $r$ and co-rank $s$. If $\mathbf{A}_{\mathcal{L}}$ is the adjacency matrix of its line multigraph, then:
$$\rho(\mathbf{Q}) - r \leq \rho(\mathbf{A}_{\mathcal{L}}) \leq \rho(\mathbf{Q}) - s.$$
Furthermore, either equality holds if and only if $\mathcal{H}$ is uniform.
\end{theorem}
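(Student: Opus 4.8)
The plan is to route everything through the identity $\mathbf{B}^{\mathrm{T}}\mathbf{B} = \mathbf{C} + \mathbf{A}_{\mathcal{L}}$ of Theorem~\ref{teo:multigrafo}, exploiting that $\mathbf{Q} = \mathbf{B}\mathbf{B}^{\mathrm{T}}$ and $\mathbf{B}^{\mathrm{T}}\mathbf{B}$ are both positive semi-definite and share the same nonzero eigenvalues. Since the largest eigenvalue of each is positive (the hypergraph has at least one hyperedge, so $\mathbf{B} \neq \mathbf{0}$), this gives $\rho(\mathbf{Q}) = \rho(\mathbf{B}^{\mathrm{T}}\mathbf{B})$, and it suffices to compare $\rho(\mathbf{A}_{\mathcal{L}})$ with $\rho(\mathbf{B}^{\mathrm{T}}\mathbf{B})$. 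Throughout I would use that $\mathcal{L}(\mathcal{H})$ is connected, so $\mathbf{A}_{\mathcal{L}}$ is irreducible; as $\mathbf{C}$ is a positive diagonal matrix, $\mathbf{B}^{\mathrm{T}}\mathbf{B} = \mathbf{C} + \mathbf{A}_{\mathcal{L}}$ is irreducible as well. By Perron--Frobenius, both matrices then admit unit-norm eigenvectors with strictly positive entries attaining their respective spectral radii.

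For the two inequalities I would argue via Rayleigh quotients. Let $\mathbf{y}$ be the positive Perron eigenvector of $\mathbf{A}_{\mathcal{L}}$. Then $\rho(\mathbf{A}_{\mathcal{L}}) = \mathbf{y}^{\mathrm{T}}\mathbf{A}_{\mathcal{L}}\mathbf{y} = \mathbf{y}^{\mathrm{T}}\mathbf{B}^{\mathrm{T}}\mathbf{B}\mathbf{y} - \mathbf{y}^{\mathrm{T}}\mathbf{C}\mathbf{y} \le \rho(\mathbf{B}^{\mathrm{T}}\mathbf{B}) - s$, since $\mathbf{y}^{\mathrm{T}}\mathbf{B}^{\mathrm{T}}\mathbf{B}\mathbf{y} \le \rho(\mathbf{B}^{\mathrm{T}}\mathbf{B})$ and $\mathbf{y}^{\mathrm{T}}\mathbf{C}\mathbf{y} = \sum_i |e_i|\,y_i^2 \ge s$; this is the upper bound. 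Symmetrically, taking the positive Perron eigenvector $\mathbf{z}$ of $\mathbf{B}^{\mathrm{T}}\mathbf{B}$ gives $\rho(\mathbf{Q}) = \mathbf{z}^{\mathrm{T}}\mathbf{B}^{\mathrm{T}}\mathbf{B}\mathbf{z} = \mathbf{z}^{\mathrm{T}}\mathbf{C}\mathbf{z} + \mathbf{z}^{\mathrm{T}}\mathbf{A}_{\mathcal{L}}\mathbf{z} \le r + \rho(\mathbf{A}_{\mathcal{L}})$, using $\mathbf{z}^{\mathrm{T}}\mathbf{C}\mathbf{z} \le r$ and $\mathbf{z}^{\mathrm{T}}\mathbf{A}_{\mathcal{L}}\mathbf{z} \le \rho(\mathbf{A}_{\mathcal{L}})$, which rearranges to the lower bound $\rho(\mathbf{Q}) - r \le \rho(\mathbf{A}_{\mathcal{L}})$.

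For the equality characterization, the easy direction is that uniformity forces $\mathbf{C} = r\mathbf{I}$ with $r = s$, so $\mathbf{B}^{\mathrm{T}}\mathbf{B}$ and $\mathbf{A}_{\mathcal{L}}$ differ by a scalar multiple of the identity; hence $\rho(\mathbf{A}_{\mathcal{L}}) = \rho(\mathbf{Q}) - r = \rho(\mathbf{Q}) - s$ and both equalities hold. Conversely, I would trace which estimate becomes tight. If the upper bound is an equality, then in particular $\mathbf{y}^{\mathrm{T}}\mathbf{C}\mathbf{y} = s$; since every $y_i > 0$ by Perron positivity and each $|e_i| \ge s$, the identity $\sum_i |e_i|\,y_i^2 = s\sum_i y_i^2$ forces $|e_i| = s$ for all $i$, i.e. $\mathcal{H}$ is uniform. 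The lower-bound case is identical with $\mathbf{z}$ and $r$ replacing $\mathbf{y}$ and $s$. The step deserving the most care---and the main obstacle---is precisely this converse: it relies on the strict positivity of the Perron eigenvector (available only through the irreducibility supplied by connectivity), since it is this positivity that upgrades ``the $y_i^2$-weighted average of the $|e_i|$ equals the extreme value $s$ (or $r$)'' to ``every $|e_i|$ equals $s$ (or $r$).'' I would also record at the outset the identification $\rho(\mathbf{Q}) = \rho(\mathbf{B}^{\mathrm{T}}\mathbf{B})$, on which the whole comparison silently rests.
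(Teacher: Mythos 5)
Your proof is correct and takes essentially the same route as the paper's: the identity $\mathbf{B}^{\mathrm{T}}\mathbf{B} = \mathbf{C} + \mathbf{A}_{\mathcal{L}}$ from Theorem~\ref{teo:multigrafo}, the observation that $\mathbf{Q}$ and $\mathbf{B}^{\mathrm{T}}\mathbf{B}$ share the same spectral radius, Rayleigh-quotient estimates using the two principal eigenvectors, and the bounds $s \leq \mathbf{x}^{\mathrm{T}}\mathbf{C}\mathbf{x} \leq r$. If anything, your handling of the equality case is more careful than the paper's: you make explicit the Perron--Frobenius positivity (via connectivity and irreducibility) needed to upgrade the weighted-average identity $\sum_i |e_i|\,y_i^2 = s$ (or $= r$) to uniformity of all hyperedge cardinalities, a step the paper compresses into the single sentence ``for connected hypergraphs, this occurs precisely when $\mathbf{C}$ is a scalar matrix.''
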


\begin{proof}
First, observe that $\mathbf{Q} = \mathbf{B}\mathbf{B}^\mathrm{T}$ and $\mathbf{B}^\mathrm{T}\mathbf{B}$ share the same spectral radius. Let $\mathbf{x} \in \mathbb{R}^{|E|}$ be a normalized vector. Since $s$ and $r$ are the minimum and maximum diagonal entries of $\mathbf{C}$, respectively, we have
\begin{equation*}
s \leq \mathbf{x}^\mathrm{T}\mathbf{C}\mathbf{x} \leq r.
\end{equation*}

Let $\mathbf{u}$ be the normalized principal eigenvector of $\mathbf{B}^\mathrm{T}\mathbf{B}$. Then:$$\rho(\mathbf{Q}) = \mathbf{u}^\mathrm{T}(\mathbf{C} + \mathbf{A}_{\mathcal{L}})\mathbf{u} = \mathbf{u}^\mathrm{T}\mathbf{C}\mathbf{u} + \mathbf{u}^\mathrm{T}\mathbf{A}_{\mathcal{L}}\mathbf{u} \leq r + \rho(\mathbf{A}_{\mathcal{L}}).$$

Let $\mathbf{v}$ be the normalized principal eigenvector of $\mathbf{A}_{\mathcal{L}}$. Then:
$$\rho(\mathbf{Q}) \geq \mathbf{v}^\mathrm{T}(\mathbf{C} + \mathbf{A}_{\mathcal{L}})\mathbf{v} = \mathbf{v}^\mathrm{T}\mathbf{C}\mathbf{v} + \rho(\mathbf{A}_{\mathcal{L}}) \geq s + \rho(\mathbf{A}_{\mathcal{L}}).$$

If $\mathcal{H}$ is uniform, then $\mathbf{C} = r\mathbf{I} = s\mathbf{I}$ and both bounds hold with equality. Conversely, suppose that the upper bound holds with equality, that is, $\rho(\mathbf{Q}) = r + \rho(\mathbf{A}_{\mathcal{L}})$. Since $\mathbf{u}^\mathrm{T}\mathbf{C}\mathbf{u} \leq r$ and $\mathbf{u}^\mathrm{T}\mathbf{A}_{\mathcal{L}}\mathbf{u} \leq \rho(\mathbf{A}_{\mathcal{L}})$, both inequalities must hold with equality; in particular, $\mathbf{u}^\mathrm{T}\mathbf{C}\mathbf{u} = \sum_{i} |e_i| u_i^2 = r$. As $\mathcal{H}$ is connected, the non-negative matrix $\mathbf{B}^\mathrm{T}\mathbf{B} = \mathbf{C} + \mathbf{A}_{\mathcal{L}}$ is irreducible, so all entries of its principal eigenvector $\mathbf{u}$ are positive, by the Perron-Frobenius Theorem. Hence $\sum_{i} (r - |e_i|)u_i^2 = 0$ forces $|e_i| = r$ for every $i$, and $\mathcal{H}$ is uniform. The same argument, applied to the principal eigenvector $\mathbf{v}$ of the irreducible non-negative matrix $\mathbf{A}_{\mathcal{L}}$, shows that equality in the lower bound forces $|e_i| = s$ for every $i$.
\end{proof}

For uniform hypergraphs, both bounds of Theorem \ref{teo:raio_espectral} collapse into the exact identity $\rho(\mathbf{A}_{\mathcal{L}}) = \rho(\mathbf{Q}) - k$, observed in \cite{kaue-lap}. Here, we generalize this identity to general hypergraphs, showing how it becomes a pair of bounds, controlled by the rank and the co-rank, when uniformity is dropped.

\begin{theorem}\label{teo:qcotas}
Let $\mathcal{H}$ be a hypergraph with rank $r$ and co-rank $s$. If $\rho(\mathbf{Q})$ denotes the spectral radius of its signless Laplacian matrix, then:
\begin{equation*}
\min_{e\in E} \left\{ \sum_{v \in e} d(v) \right\} - (r-s) \leq \rho(\mathbf{Q}) \leq \max_{e\in E} \left\{ \sum_{v \in e} d(v) \right\} + (r-s).
\end{equation*}
Either equality holds if and only if $\mathcal{H}$ is both uniform and edge-regular.
\end{theorem}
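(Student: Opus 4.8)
The plan is to combine the spectral radius comparison from Theorem \ref{teo:raio_espectral} with an elementary bound on the spectral radius $\rho(\mathbf{A}_{\mathcal{L}})$ of the line multigraph in terms of its maximum and minimum degrees. First I would recall from Theorem \ref{teo:raio_espectral} that
\begin{equation*}
\rho(\mathbf{Q}) - r \leq \rho(\mathbf{A}_{\mathcal{L}}) \leq \rho(\mathbf{Q}) - s,
\end{equation*}
so that $\rho(\mathbf{Q}) \leq \rho(\mathbf{A}_{\mathcal{L}}) + r$ and $\rho(\mathbf{Q}) \geq \rho(\mathbf{A}_{\mathcal{L}}) + s$. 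The strategy is then to sandwich $\rho(\mathbf{A}_{\mathcal{L}})$ between the minimum and maximum vertex degrees of the multigraph $\mathcal{L}(\mathcal{H})$, a standard fact for non-negative symmetric matrices: for any multigraph $\mathcal{G}$ one has $\delta(\mathcal{G}) \leq \rho(\mathbf{A}(\mathcal{G})) \leq \Delta(\mathcal{G})$.

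Next I would translate these degree bounds into hypergraph language using Lemma \ref{lem:graulinha}, which gives $d_{\mathcal{L}}(u) = \sum_{v \in e}(d_{\mathcal{H}}(v) - 1) = \left(\sum_{v \in e} d_{\mathcal{H}}(v)\right) - |e|$ for the vertex $u$ corresponding to hyperedge $e$. Hence $\Delta(\mathcal{L}(\mathcal{H})) \leq \max_{e \in E}\{\sum_{v \in e} d(v)\} - s$ and $\delta(\mathcal{L}(\mathcal{H})) \geq \min_{e \in E}\{\sum_{v \in e} d(v)\} - r$, where $|e| \leq r$ and $|e| \geq s$ are used to control the $-|e|$ term. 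Chaining these with the comparison inequalities yields
\begin{equation*}
\rho(\mathbf{Q}) \leq \Delta(\mathcal{L}(\mathcal{H})) + r \leq \max_{e \in E}\left\{\sum_{v \in e} d(v)\right\} - s + r = \max_{e \in E}\left\{\sum_{v \in e} d(v)\right\} + (r-s),
\end{equation*}
and symmetrically the lower bound $\rho(\mathbf{Q}) \geq \delta(\mathcal{L}(\mathcal{H})) + s \geq \min_{e \in E}\{\sum_{v \in e} d(v)\} + s - r$, which is the desired pair of inequalities.

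The main obstacle, and the part requiring the most care, is the equality analysis. The upper bound collapses to an equality only when both contributing inequalities are tight simultaneously: the comparison $\rho(\mathbf{Q}) \leq \rho(\mathbf{A}_{\mathcal{L}}) + r$ is sharp exactly when $\mathcal{H}$ is uniform (by the equality clause of Theorem \ref{teo:raio_espectral}, forcing $r = s$), while $\rho(\mathbf{A}_{\mathcal{L}}) = \Delta(\mathcal{L}(\mathcal{H}))$ for a connected multigraph holds if and only if $\mathcal{L}(\mathcal{H})$ is regular, which by Theorem \ref{teo:arestareg} means $\mathcal{H}$ is skew edge-regular. I would argue that under uniformity, skew edge-regularity and edge-regularity coincide by Lemma \ref{lem:uniform_equiv}, and that the remaining slack term $\max_e\{\sum_{v\in e} d(v)\} - \Delta(\mathcal{L}(\mathcal{H})) - s$ vanishes precisely when every hyperedge has cardinality $r = s$ and achieves the maximal degree sum, i.e. when $\mathcal{H}$ is uniform and edge-regular. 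The reverse direction is a direct computation: if $\mathcal{H}$ is $k$-uniform and edge-regular then $r = s = k$ and $\sum_{v\in e} d(v)$ is constant, so both bounds reduce to this common value and equality is immediate. The lower bound is handled by the same reasoning applied to $\delta$ and the Perron eigenvector, noting that connectivity guarantees the Perron vector is positive so the relevant tightness conditions are genuine regularity conditions rather than artifacts of a disconnected structure.
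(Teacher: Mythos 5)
Your proposal is correct and takes essentially the same route as the paper's own proof: it chains the comparison of Theorem \ref{teo:raio_espectral} with the standard degree bounds on $\rho(\mathbf{A}_{\mathcal{L}})$, translates line-multigraph degrees via Lemma \ref{lem:graulinha}, and resolves the equality case through Theorem \ref{teo:arestareg} combined with Lemma \ref{lem:uniform_equiv}. If anything, your equality analysis is slightly more explicit than the paper's in tracking the residual slack term $\max_{e\in E}\bigl\{\sum_{v\in e} d(v)\bigr\} - \Delta(\mathcal{L}(\mathcal{H})) - s$, which the paper subsumes under the uniformity condition.
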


\begin{proof}
Let $u \in V(\mathcal{L}(\mathcal{H}))$ be a vertex in the line multigraph corresponding to the hyperedge $e_u \in E(\mathcal{H})$. By Lemma \ref{lem:graulinha}, the degree of $u$ in the line multigraph is:
\begin{equation*}
d_{\mathcal{L}}(u) = \left( \sum_{v \in e_u} d_{\mathcal{H}}(v) \right) - |e_u|.
\end{equation*}
From Theorem \ref{teo:raio_espectral}, we have the following bounds for the spectral radius:
\begin{equation}\label{eq:lema-rho-refined}
\rho(\mathbf{Q}) - r \leq \rho(\mathbf{A}_{\mathcal{L}}) \leq \rho(\mathbf{Q}) - s.
\end{equation}
It is a well-established result that the spectral radius of the adjacency matrix of a connected graph lies between its minimum and maximum degrees, with either equality only in the regular case \cite{Stevanovic}; the Perron-Frobenius argument extends this statement to connected multigraphs:
\begin{equation}\label{eq:multigraph-refined}
\min_{u \in V(\mathcal{L})} d_{\mathcal{L}}(u) \leq \rho(\mathbf{A}_{\mathcal{L}}) \leq \max_{u \in V(\mathcal{L})} d_{\mathcal{L}}(u).
\end{equation}
Combining the lower bound of \eqref{eq:lema-rho-refined} with the upper bound of \eqref{eq:multigraph-refined}, we obtain:
\begin{equation*}
\rho(\mathbf{Q}) - r \leq \max_{e \in E} \left\{ \left( \sum_{v \in e} d_{\mathcal{H}}(v) \right) - |e| \right\} \leq \max_{e \in E} \left\{ \sum_{v \in e} d_{\mathcal{H}}(v) \right\} - s.
\end{equation*}
Similarly, for the lower bound:
\begin{equation*}
\rho(\mathbf{Q}) - s \geq \min_{e \in E} \left\{ \left( \sum_{v \in e} d_{\mathcal{H}}(v) \right) - |e| \right\} \geq \min_{e \in E} \left\{ \sum_{v \in e} d_{\mathcal{H}}(v) \right\} - r.
\end{equation*}
If $\mathcal{H}$ is uniform and edge-regular, then $|e| = r = s$ for every hyperedge and, by Lemma \ref{lem:uniform_equiv} and Theorem \ref{teo:arestareg}, $\mathcal{L}(\mathcal{H})$ is regular; hence all the inequalities above collapse into the equality $\rho(\mathbf{Q}) = \sum_{v \in e} d(v)$ for any $e \in E(\mathcal{H})$.

Conversely, suppose that the upper bound holds with equality. Then each inequality in the chain above must hold with equality: (i) $\rho(\mathbf{Q}) - r = \rho(\mathbf{A}_{\mathcal{L}})$, which, by Theorem \ref{teo:raio_espectral}, implies that $\mathcal{H}$ is uniform; (ii) $\rho(\mathbf{A}_{\mathcal{L}}) = \max_{u} d_{\mathcal{L}}(u)$, which, for a connected multigraph, holds if and only if $\mathcal{L}(\mathcal{H})$ is regular \cite{Stevanovic}; and (iii) $\max_{e}\left\{\sum_{v \in e} d(v) - |e|\right\} = \max_{e}\left\{\sum_{v \in e} d(v)\right\} - s$, which is automatic once $\mathcal{H}$ is uniform, since then $|e| = s$ for every $e$. By Theorem \ref{teo:arestareg} and Lemma \ref{lem:uniform_equiv}, a uniform hypergraph with regular line multigraph is edge-regular. The analysis of the lower bound is analogous, using that the spectral radius of a connected multigraph equals its minimum degree only in the regular case \cite{Stevanovic}. Thus, either equality holds if and only if $\mathcal{H}$ is uniform and edge-regular.
\end{proof}

\begin{definition}\label{def:general_power}
Let $\mathcal{H}=(V,E)$ be a hypergraph with rank $r$. For integers $t \geq 1$ and $q \geq 0$, the \textit{general power hypergraph} $\mathcal{H}^k_t$ is the hypergraph with rank $k = tr + q$ constructed from $\mathcal{H}$ as follows:
\begin{itemize}
    \item \textbf{Vertex Expansion:} Each vertex $v \in V(\mathcal{H})$ is replaced by a set $\tau_v$ of cardinality $t$. Each vertex in $\tau_v$ inherits the incidences of the original vertex $v$. Specifically, for any $e \in E(\mathcal{H})$, the corresponding hyperedge $e_t \in E(\mathcal{H}_t)$ contains the entire set $\tau_v$ if and only if $v \in e$.
    
    \item \textbf{Hyperedge Padding:} For each hyperedge $e_t \in E(\mathcal{H}_t)$, a set $\tau_e$ of $q = k - rt$ new distinct vertices is added to the corresponding hyperedge $e^k_t$. Each vertex in $\tau_e$ is defined to have degree one in $\mathcal{H}^k_t$.
\end{itemize}
\end{definition}

When $t = 1$, we write $\mathcal{H}^k := \mathcal{H}^k_1$ (hyperedge padding only); when $k = rt$, we write $\mathcal{H}_t := \mathcal{H}^{rt}_t$ (vertex expansion only). Observe that these operations compose: $\mathcal{H}^k_t = (\mathcal{H}_t)^k$, that is, every general power hypergraph is obtained by first expanding the vertices and then padding the hyperedges.

The signless Laplacian spectrum of power hypergraphs was previously investigated for the uniform-base case in \cite{kaue-lap}, and the behavior of line multigraphs of uniform hypergraphs under power operations was described in \cite{kaue-energia}. Here, applying line multigraph theory, we extend both to general base hypergraphs and provide a significantly more concise proof. By analyzing the structural transformation of the line multigraph, specifically the scaling of its adjacency matrix, we avoid the combinatorial complexities inherent to non-uniform incidence structures.

\begin{corollary}\label{cor:power_inv}
Let $\mathcal{H}$ be a hypergraph with rank $r$. For any integer $k \geq r$, the line multigraph of the power hypergraph $\mathcal{H}^k$ satisfies $\mathcal{L}(\mathcal{H}^k) = \mathcal{L}(\mathcal{H})$.
\end{corollary}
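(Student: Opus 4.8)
The plan is to recognize that the equality in the corollary is exactly the $t=1$ specialization of Definition \ref{def:general_power}, and that in this case the construction reduces entirely to the insertion of degree-one vertices, to which the line multigraph is insensitive. First I would note that for $t=1$ the vertex-expansion step is vacuous: each vertex $v$ is replaced by a singleton $\tau_v$, which leaves the incidence structure of $\mathcal{H}$ unchanged up to relabeling. Hence $\mathcal{H}^k = \mathcal{H}^k_1$ is obtained from $\mathcal{H}$ by the padding step alone, which attaches to every hyperedge $e$ a set $\tau_e$ of $q = k - r$ new, pairwise-distinct vertices, each of degree one in $\mathcal{H}^k$. (It is precisely this triviality of the expansion that yields an equality rather than the multiplicity-scaling $t\mathbf{A}_{\mathcal{L}}$ one would see for $t>1$.)

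The key step is then to observe that padding preserves every pairwise intersection. Since each padding vertex lies in exactly one hyperedge and in no other, it contributes nothing to any intersection; that is, for distinct hyperedges $e_i, e_j$ the cardinality $|e_i \cap e_j|$ is the same in $\mathcal{H}$ and in $\mathcal{H}^k$. Because padding neither creates nor destroys hyperedges, the sets $E(\mathcal{H})$ and $E(\mathcal{H}^k)$ are in natural bijection, which identifies the vertex sets of the two line multigraphs; and because all intersection cardinalities are preserved, this identification carries the edge multiplicities of $\mathcal{L}(\mathcal{H})$ exactly onto those of $\mathcal{L}(\mathcal{H}^k)$, giving $\mathcal{L}(\mathcal{H}^k) = \mathcal{L}(\mathcal{H})$. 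Formally I would realize this as an iterated application of Lemma \ref{lem:remove-vertex} (equivalently Corollary \ref{cor:infinitely_many}): removing the padding vertices from $\mathcal{H}^k$ one at a time produces a finite chain $\mathcal{H}^k = \mathcal{H}_0, \mathcal{H}_1, \dots, \mathcal{H}_N = \mathcal{H}$ along which the line multigraph is constant.

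The one point requiring care — and the only real obstacle — is the hypothesis $|e| \geq 3$ in Lemma \ref{lem:remove-vertex}, needed so that no removal produces a forbidden cardinality-one hyperedge. This is handled by the bookkeeping observation that, as long as any padding vertex remains in a hyperedge $e$, that hyperedge has cardinality at least $|e_{\mathrm{orig}}| + 1 \geq 3$, using that $\mathcal{H}$ is simple and hence $|e_{\mathrm{orig}}| \geq 2$; thus every deletion in the chain is admissible, since we delete only padding vertices and stop at the original hyperedge, never reducing it below cardinality two. The degenerate case $k = r$ is immediate, as then $q = 0$ and $\mathcal{H}^k = \mathcal{H}$.
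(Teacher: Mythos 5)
Your proof is correct and takes essentially the same approach as the paper: the core of both arguments is that each padding vertex has degree one, hence lies in no pairwise intersection, so every cardinality $|e_i \cap e_j|$ — and therefore the line multigraph — is unchanged. Your additional formalization as an iterated application of Lemma \ref{lem:remove-vertex}, including the verification that the $|e| \geq 3$ hypothesis holds at every step because $\mathcal{H}$ is simple, is extra rigor that the paper's direct proof does not spell out, but it rests on the same underlying idea.
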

\begin{proof}
The hyperedge padding step in the construction of $\mathcal{H}^k$ (Definition \ref{def:general_power}) adjoins degree-one vertices to each hyperedge, exactly as in Lemma \ref{lem:remove-vertex} and Corollary \ref{cor:uniform_representation}: since a degree-one vertex belongs to no other hyperedge, it cannot contribute to any intersection $e_i \cap e_j$ with $i \neq j$. All such intersections, and hence $\mathcal{L}(\mathcal{H})$ itself, are therefore unchanged by the padding.
\end{proof}

\begin{definition}\label{def:scaled_multigraph}
Let $\mathcal{G}$ be a multigraph and $t \geq 1$ be an integer. The \textit{scaled multigraph} $t \cdot \mathcal{G}$ is the multigraph obtained from $\mathcal{G}$ as follows:
\begin{itemize}
\item The vertex set is preserved: $V(t \cdot \mathcal{G}) = V(\mathcal{G})$;
\item The edge multiplicities are scaled by $t$: if a pair of vertices is connected by $p$ edges in $\mathcal{G}$, then they are connected by $tp$ edges in $t \cdot \mathcal{G}$.
\end{itemize}
\end{definition}

\begin{lemma}\label{lem:linhk}
	Let $\mathcal{H}$ be a hypergraph. If $t \geq 1$ is an integer, then $\mathcal{L}(\mathcal{H}_t) = t \cdot \mathcal{L}(\mathcal{H})$.
\end{lemma}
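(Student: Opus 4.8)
The plan is to exhibit an explicit identification of the two multigraphs, first matching their vertex sets and then verifying that their edge multiplicities agree pairwise. Since the vertices of a line multigraph are by definition the hyperedges of the underlying hypergraph, I would begin by recording the natural bijection $e \mapsto e_t$ between $E(\mathcal{H})$ and $E(\mathcal{H}_t)$ furnished by the vertex-expansion construction (the first step of Definition \ref{def:general_power}, which produces $\mathcal{H}_t$). This bijection identifies $V(\mathcal{L}(\mathcal{H}_t))$ with $V(\mathcal{L}(\mathcal{H}))$, and hence with $V(t \cdot \mathcal{L}(\mathcal{H}))$, since scaling preserves the vertex set by Definition \ref{def:scaled_multigraph}.

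With the vertices identified, it remains to compare multiplicities. The multiplicity of the edge joining the vertices corresponding to $e_{i,t}$ and $e_{j,t}$ in $\mathcal{L}(\mathcal{H}_t)$ equals $|e_{i,t} \cap e_{j,t}|$, so the crux is the computation $|e_{i,t} \cap e_{j,t}| = t\,|e_i \cap e_j|$. For this I would use three facts, each immediate from the construction: every original vertex $v$ is replaced by a block $\tau_v$ of cardinality $t$; these blocks are pairwise disjoint for distinct $v$; and $\tau_v \subseteq e_{i,t}$ if and only if $v \in e_i$. Combining these yields
\begin{equation*}
e_{i,t} \cap e_{j,t} = \bigcup_{v \in e_i \cap e_j} \tau_v,
\end{equation*}
a disjoint union of $|e_i \cap e_j|$ blocks, each of size $t$, so that $|e_{i,t} \cap e_{j,t}| = t\,|e_i \cap e_j|$.

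Finally, I would invoke Definition \ref{def:scaled_multigraph} to conclude. The multiplicity of each edge of $\mathcal{L}(\mathcal{H})$ is $|e_i \cap e_j|$, and the corresponding multiplicity in $\mathcal{L}(\mathcal{H}_t)$ is exactly $t$ times this value; this is precisely the defining property of the scaled multigraph, whence $\mathcal{L}(\mathcal{H}_t) = t \cdot \mathcal{L}(\mathcal{H})$. I do not anticipate any substantive obstacle: the argument is a direct cardinality count. The only point requiring care is the disjointness of the blocks $\tau_v$ together with the equivalence $\tau_v \subseteq e_{i,t} \iff v \in e_i$, both of which follow immediately from the vertex-expansion rule, and which must be stated explicitly so that the union above is genuinely disjoint and the factor $t$ is correct.
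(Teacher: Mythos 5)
Your proposal is correct and follows essentially the same route as the paper's proof: identify $V(\mathcal{L}(\mathcal{H}_t))$ with $V(\mathcal{L}(\mathcal{H}))$ via the natural bijection $e \mapsto e_t$, then show $|e_{i,t} \cap e_{j,t}| = t\,|e_i \cap e_j|$ so that every edge multiplicity scales by $t$ as required by Definition \ref{def:scaled_multigraph}. Your explicit verification that the intersection is \emph{exactly} the disjoint union $\bigcup_{v \in e_i \cap e_j} \tau_v$ (via block disjointness and the equivalence $\tau_v \subseteq e_{i,t} \iff v \in e_i$) is a slightly more careful rendering of the counting step the paper states directly.
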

\begin{proof}
By construction, the hyperedges of $\mathcal{H}_t$ are in one-to-one correspondence with those of $\mathcal{H}$, implying $V(\mathcal{L}(\mathcal{H}_t)) = V(\mathcal{L}(\mathcal{H}))$. Let $e_i, e_j \in E(\mathcal{H})$ be two distinct hyperedges with an intersection of cardinality $p = |e_i \cap e_j|$. In the expanded hypergraph $\mathcal{H}_t$, each vertex $v \in e_i \cap e_j$ is replaced by a set $\tau_v$ of $t$ vertices, all of which are contained in the intersection of the corresponding hyperedges $(e_i)_t, (e_j)_t \in E(\mathcal{H}_t)$. Consequently, $|(e_i)_t \cap (e_j)_t| = \sum_{v \in e_i \cap e_j} |\tau_v| = t \cdot p$. By the definition of a line multigraph, the multiplicity of the edge connecting $(e_i)_t$ and $(e_j)_t$ in $\mathcal{L}(\mathcal{H}_t)$ is exactly $t$ times the multiplicity of the edge connecting $e_i$ and $e_j$ in $\mathcal{L}(\mathcal{H})$. This satisfies the definition of the scaled multigraph, thus $\mathcal{L}(\mathcal{H}_t) = t \cdot \mathcal{L}(\mathcal{H})$.\end{proof}

\begin{remark}\label{rem:scal-matrix}
Since $\mathcal{H}^k_t = (\mathcal{H}_t)^k$, Corollary \ref{cor:power_inv} and Lemma \ref{lem:linhk} together imply the matrix identity $\mathbf{A}_{\mathcal{L}}(\mathcal{H}^k_t) = t\,\mathbf{A}_{\mathcal{L}}(\mathcal{H})$. Observe that this scaling depends entirely on the vertex expansion factor and is independent of edge padding.
\end{remark}

For a square matrix $\mathbf{M}$, we denote by $\mathsf{P}_{\mathbf{M}}(\lambda) = \det(\lambda\mathbf{I} - \mathbf{M})$ its characteristic polynomial; for a multigraph $\mathcal{G}$, we abbreviate $\mathsf{P}_{\mathcal{G}} := \mathsf{P}_{\mathbf{A}(\mathcal{G})}$.

\begin{proposition}\label{teo:espec-power-lin}
Let $\mathcal{H}$ be a hypergraph with rank $r$ and $m$ edges. For integers $t \geq 1$ and $k \geq rt$, let $\mathcal{H}^k_t$ be its general power hypergraph. Then:
\begin{equation*}
\mathsf{P}_{\mathcal{L}(\mathcal{H}^k_t)}(\lambda) = t^m \mathsf{P}_{\mathcal{L}(\mathcal{H})}(\lambda/t).
\end{equation*}
Consequently, $\lambda$ is an eigenvalue of $\mathbf{A}_{\mathcal{L}}(\mathcal{H})$ if and only if $t\lambda$ is an eigenvalue of $\mathbf{A}_{\mathcal{L}}(\mathcal{H}^k_t)$.
\end{proposition}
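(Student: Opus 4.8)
The plan is to reduce the entire statement to the single matrix identity recorded in Remark \ref{rem:scal-matrix}, after which only an elementary determinant manipulation remains. First I would observe that the hyperedges of $\mathcal{H}^k_t$ are in bijection with those of $\mathcal{H}$ (both vertex expansion and edge padding preserve the number of hyperedges), so by the definition of the line multigraph both $\mathcal{L}(\mathcal{H}^k_t)$ and $\mathcal{L}(\mathcal{H})$ have exactly $m$ vertices. Hence $\mathbf{A}_{\mathcal{L}}(\mathcal{H}^k_t)$ and $\mathbf{A}_{\mathcal{L}}(\mathcal{H})$ are both $m \times m$ matrices, and the characteristic polynomials $\mathsf{P}_{\mathcal{L}(\mathcal{H}^k_t)}$ and $\mathsf{P}_{\mathcal{L}(\mathcal{H})}$ both have degree $m$.

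Next I would invoke Remark \ref{rem:scal-matrix}, which gives $\mathbf{A}_{\mathcal{L}}(\mathcal{H}^k_t) = t\,\mathbf{A}_{\mathcal{L}}(\mathcal{H})$. Writing the characteristic polynomial as $\mathsf{P}_{\mathcal{L}(\mathcal{H}^k_t)}(\lambda) = \det\!\left(\lambda \mathbf{I}_m - \mathbf{A}_{\mathcal{L}}(\mathcal{H}^k_t)\right)$ and substituting this identity yields $\det\!\left(\lambda \mathbf{I}_m - t\,\mathbf{A}_{\mathcal{L}}(\mathcal{H})\right)$. Factoring the scalar $t$ out of each of the $m$ rows, equivalently using $\det(t M) = t^m \det(M)$ for an $m \times m$ matrix $M$, gives $t^m \det\!\left((\lambda/t)\mathbf{I}_m - \mathbf{A}_{\mathcal{L}}(\mathcal{H})\right) = t^m \mathsf{P}_{\mathcal{L}(\mathcal{H})}(\lambda/t)$, which is exactly the claimed polynomial identity.

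Finally I would read off the spectral consequence: $\lambda$ is an eigenvalue of $\mathbf{A}_{\mathcal{L}}(\mathcal{H})$ precisely when $\mathsf{P}_{\mathcal{L}(\mathcal{H})}(\lambda) = 0$, and the polynomial identity shows $\mathsf{P}_{\mathcal{L}(\mathcal{H}^k_t)}(t\lambda) = t^m \mathsf{P}_{\mathcal{L}(\mathcal{H})}(\lambda)$; since $t \geq 1$ forces $t^m \neq 0$, the two root sets correspond exactly under the scaling $\lambda \mapsto t\lambda$. There is no genuine obstacle here, because the substantive content has already been discharged in establishing the scaling identity $\mathbf{A}_{\mathcal{L}}(\mathcal{H}^k_t) = t\,\mathbf{A}_{\mathcal{L}}(\mathcal{H})$ (via Corollary \ref{cor:power_inv} for padding invariance and Lemma \ref{lem:linhk} for the vertex-expansion scaling). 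The only point requiring a moment's care is confirming that the two matrices share the common order $m$, since the factor $t^m$ in the determinant manipulation depends on this dimension; once that is in place, the argument is purely formal.
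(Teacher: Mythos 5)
Your proof is correct and follows essentially the same route as the paper's: both invoke Remark \ref{rem:scal-matrix} to get $\mathbf{A}_{\mathcal{L}}(\mathcal{H}^k_t) = t\,\mathbf{A}_{\mathcal{L}}(\mathcal{H})$ and then apply the determinant scaling $\det(\lambda\mathbf{I}_m - t\mathbf{A}) = t^m \det\bigl((\lambda/t)\mathbf{I}_m - \mathbf{A}\bigr)$. Your extra check that both matrices have common order $m$ is a sound precaution, but otherwise the arguments coincide.
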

\begin{proof}
By applying Remark \ref{rem:scal-matrix}, we have the scaling matrix identity $\mathbf{A}_{\mathcal{L}}(\mathcal{H}^k_t) = t \mathbf{A}_{\mathcal{L}}(\mathcal{H})$. The characteristic polynomial is then:
\begin{align*}
\mathsf{P}_{\mathcal{L}(\mathcal{H}^k_t)}(\lambda) &= \det\left(\lambda\mathbf{I}_m - \mathbf{A}(\mathcal{L}(\mathcal{H}^k_t)) \right) \\
&= \det\left(\lambda\mathbf{I}_m - t\mathbf{A}(\mathcal{L}(\mathcal{H})) \right) \\
&= t^m \det\left((\lambda/t)\mathbf{I}_m - \mathbf{A}(\mathcal{L}(\mathcal{H})) \right) \\
&= t^m \mathsf{P}_{\mathcal{L}(\mathcal{H})}(\lambda/t).
\end{align*}
The linear relationship between the eigenvalues follows directly from the roots of the transformed characteristic polynomial. For a uniform base hypergraph, this transformation of the characteristic polynomial was proved in \cite{kaue-energia}; Corollary \ref{cor:power_inv} and Lemma \ref{lem:linhk} are what make the extension to general base hypergraphs immediate.
\end{proof}

\begin{theorem}\label{teo:power_spectrum}
Let $\mathcal{H}$ be a hypergraph with $n$ vertices, $m$ edges and rank $r$, and let $\lambda_1 \geq \lambda_2 \geq \cdots \geq \lambda_n$ be the eigenvalues of its signless Laplacian matrix, where $\lambda_i > 0$ for $i \leq p$ and $\lambda_i = 0$ for $i > p$. For integers $t \geq 1$ and $k > rt$, the eigenvalues of the signless Laplacian matrix of the power hypergraph $\mathcal{H}^k_t$ are:
\begin{itemize}
    \item[i.] $(t\lambda_1+(k-rt)),\; \ldots,\; (t\lambda_p+(k-rt))$,
    \item[ii.] $(k-rt)$ with multiplicity $m-p$,
    \item[iii.] $0$ with multiplicity $(k-rt-1)m+tn$.
\end{itemize}
\end{theorem}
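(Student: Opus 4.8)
The plan is to reduce the spectral analysis of the large $N \times N$ matrix $\mathbf{Q}(\mathcal{H}^k_t) = \mathbf{B}'\mathbf{B}'^{\mathrm{T}}$, where $\mathbf{B}'$ denotes the incidence matrix of $\mathcal{H}^k_t$, to that of the much smaller $m \times m$ matrix $\mathbf{B}'^{\mathrm{T}}\mathbf{B}'$, and then to express the latter entirely in terms of data coming from $\mathcal{H}$. First I would record the size of $\mathcal{H}^k_t$ directly from Definition \ref{def:general_power}: the vertex expansion contributes $tn$ vertices and the padding contributes $k-rt$ new degree-one vertices per hyperedge, hence $m(k-rt)$ in total, so $\mathcal{H}^k_t$ has $N = tn + m(k-rt)$ vertices. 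Since the hyperedges of $\mathcal{H}^k_t$ are in bijection with those of $\mathcal{H}$, the matrix $\mathbf{B}'^{\mathrm{T}}\mathbf{B}'$ has order $m$, and all matrices indexed by hyperedges below share this common index set.

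Next I would apply Theorem \ref{teo:multigrafo} to $\mathcal{H}^k_t$, obtaining $\mathbf{B}'^{\mathrm{T}}\mathbf{B}' = \mathbf{C}' + \mathbf{A}_{\mathcal{L}}(\mathcal{H}^k_t)$, where $\mathbf{C}'$ is the cardinality matrix of $\mathcal{H}^k_t$. Two structural inputs then collapse this expression. By Remark \ref{rem:scal-matrix} we have $\mathbf{A}_{\mathcal{L}}(\mathcal{H}^k_t) = t\,\mathbf{A}_{\mathcal{L}}(\mathcal{H})$, so the adjacency part is simply scaled. For the diagonal part, the hyperedge $e^k_t$ corresponding to $e \in E(\mathcal{H})$ has cardinality $t|e| + (k-rt)$, which means $\mathbf{C}' = t\mathbf{C} + (k-rt)\mathbf{I}_m$ with $\mathbf{C}$ the cardinality matrix of $\mathcal{H}$. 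Substituting both and invoking Theorem \ref{teo:multigrafo} applied to $\mathcal{H}$ itself yields the central identity
\[
\mathbf{B}'^{\mathrm{T}}\mathbf{B}' = t\bigl(\mathbf{C} + \mathbf{A}_{\mathcal{L}}(\mathcal{H})\bigr) + (k-rt)\mathbf{I}_m = t\,\mathbf{B}^{\mathrm{T}}\mathbf{B} + (k-rt)\mathbf{I}_m .
\]

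From here the eigenvalues follow by transcription. Since $\mathbf{B}\mathbf{B}^{\mathrm{T}} = \mathbf{Q}(\mathcal{H})$ has exactly the nonzero eigenvalues $\lambda_1,\dots,\lambda_p$, and $\mathbf{B}^{\mathrm{T}}\mathbf{B}$ is an $m \times m$ matrix of the same rank $p$ sharing those nonzero eigenvalues, the spectrum of $\mathbf{B}^{\mathrm{T}}\mathbf{B}$ is $\lambda_1,\dots,\lambda_p$ together with $0$ of multiplicity $m-p$. Applying the injective affine map $\mu \mapsto t\mu + (k-rt)$ transports multiplicities faithfully and gives the spectrum of $\mathbf{B}'^{\mathrm{T}}\mathbf{B}'$: the values $t\lambda_i + (k-rt)$ for $i=1,\dots,p$, and $(k-rt)$ with multiplicity $m-p$, which are exactly lists (i) and (ii). To return to $\mathbf{Q}(\mathcal{H}^k_t) = \mathbf{B}'\mathbf{B}'^{\mathrm{T}}$ I would invoke the standard rectangular identity $\det(\lambda\mathbf{I}_N - \mathbf{B}'\mathbf{B}'^{\mathrm{T}}) = \lambda^{N-m}\det(\lambda\mathbf{I}_m - \mathbf{B}'^{\mathrm{T}}\mathbf{B}')$ valid for $N \geq m$; this appends the eigenvalue $0$ with multiplicity $N-m = (k-rt-1)m + tn$, which is precisely list (iii). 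One virtue of routing the zero count through this single determinant identity is that it handles the bookkeeping uniformly, including the degenerate case $k=rt$ in which the value in (ii) equals $0$ and simply merges with (iii), so no case split is required.

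The main obstacle, and the step deserving the most care, is the cardinality identity $\mathbf{C}' = t\mathbf{C} + (k-rt)\mathbf{I}_m$: it must be extracted directly from the vertex-expansion and padding rules of Definition \ref{def:general_power}, and one must verify that $\mathbf{C}$, $\mathbf{A}_{\mathcal{L}}(\mathcal{H})$ and their primed counterparts are all written on the same hyperedge index set so that the three matrices combine legitimately into the central identity. Once that identity is secured, the remainder is a mechanical passage of eigenvalues through an affine transformation followed by the determinant relation between $\mathbf{B}'^{\mathrm{T}}\mathbf{B}'$ and $\mathbf{B}'\mathbf{B}'^{\mathrm{T}}$, and a final check that the three listed multiplicities sum to $N$.
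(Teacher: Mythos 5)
Your proposal is correct and follows essentially the same route as the paper's proof: both reduce $\mathbf{Q}(\mathcal{H}^k_t)=\mathbf{B}'\mathbf{B}'^{\mathrm{T}}$ to the $m\times m$ matrix $\mathbf{B}'^{\mathrm{T}}\mathbf{B}'$, apply Theorem \ref{teo:multigrafo} to both $\mathcal{H}^k_t$ and $\mathcal{H}$, and combine the cardinality decomposition $\mathbf{C}'=t\mathbf{C}+(k-rt)\mathbf{I}_m$ with the scaling of Remark \ref{rem:scal-matrix}. The only difference is presentational: the paper carries out this reduction inside a single chain of characteristic-polynomial identities, whereas you isolate the matrix identity $\mathbf{B}'^{\mathrm{T}}\mathbf{B}' = t\,\mathbf{B}^{\mathrm{T}}\mathbf{B} + (k-rt)\mathbf{I}_m$ first and then transport eigenvalues through the affine map, which also lets you note cleanly that the case $k=rt$ needs no special treatment.
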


\begin{proof}
By the definition of a power hypergraph, $|V(\mathcal{H}^k_t)| = tn + m(k-rt)$; since $k > rt$, this number is at least $m$. Since $\mathbf{B}\mathbf{B}^\mathrm{T}$ and $\mathbf{B}^\mathrm{T}\mathbf{B}$ share the same non-zero eigenvalues, we have:
\begin{align*}
\mathsf{P}_{\mathbf{Q}(\mathcal{H}^k_t)}(\lambda) 
&= \det(\lambda\mathbf{I}_{tn+m(k-rt)}-\mathbf{B}(\mathcal{H}^k_t)\mathbf{B}^\mathrm{T}(\mathcal{H}^k_t))\\
&= \lambda^{tn+m(k-rt)-m}\det(\lambda\mathbf{I}_m - \mathbf{B}^\mathrm{T}(\mathcal{H}^k_t)\mathbf{B}(\mathcal{H}^k_t))\\
&= \lambda^{tn+(k-rt-1)m}\det\left(\lambda\mathbf{I}_m - \big[\mathbf{C}(\mathcal{H}^k_t)\big] - \mathbf{A}_{\mathcal{L}}(\mathcal{H}^k_t) \right) \\
&= \lambda^{tn+(k-rt-1)m}\det\left(\lambda\mathbf{I}_m - \big[(k-rt)\mathbf{I}_m + t\mathbf{C}(\mathcal{H})\big]- t\mathbf{A}_{\mathcal{L}}(\mathcal{H}) \right)\\
&= \lambda^{tn+(k-rt-1)m}\det\left([\lambda-(k-rt)]\mathbf{I}_m - t[\mathbf{C}(\mathcal{H})+\mathbf{A}_{\mathcal{L}}(\mathcal{H})] \right)\\
&= \lambda^{tn+(k-rt-1)m}\det\left([\lambda-(k-rt)]\mathbf{I}_m - t[\mathbf{B}^\mathrm{T}(\mathcal{H})\mathbf{B}(\mathcal{H})] \right)\\
&= \lambda^{tn+(k-rt-1)m} t^m\det\left(\left[ \frac{\lambda-(k-rt)}{t}\right] \mathbf{I}_m -\mathbf{B}^\mathrm{T}(\mathcal{H})\mathbf{B}(\mathcal{H}) \right).
\end{align*}
Therefore, for each non-zero eigenvalue $\lambda_i$ of $\mathbf{B}^\mathrm{T}(\mathcal{H})\mathbf{B}(\mathcal{H})$, we obtain the eigenvalue $\mu_i = t\lambda_i + (k-rt)$ for $\mathbf{Q}(\mathcal{H}^k_t)$. For each of the $(m-p)$ zero eigenvalues of $\mathbf{B}^\mathrm{T}\mathbf{B}$, we obtain the eigenvalue $(k-rt)$. Finally, the factor $\lambda^{tn+m(k-rt-1)}$ accounts for the multiplicity of the eigenvalue $0$.
\end{proof}

\begin{remark}
When $k = rt$, that is, when no padding occurs, item (iii) is no longer meaningful as stated, since the exponent $tn - m$ may be negative. In this case, a direct computation using Theorem \ref{teo:multigrafo} and Remark \ref{rem:scal-matrix} gives $\mathbf{B}^\mathrm{T}(\mathcal{H}_t)\mathbf{B}(\mathcal{H}_t) = t\,\mathbf{B}^\mathrm{T}(\mathcal{H})\mathbf{B}(\mathcal{H})$, so the non-zero eigenvalues of $\mathbf{Q}(\mathcal{H}_t)$ are $t\lambda_1, \ldots, t\lambda_p$, and $0$ is an eigenvalue with multiplicity $tn - p$, which is consistent with $|V(\mathcal{H}_t)| = tn$.
\end{remark}

The results of this section follow a common pattern: once the incidence identity of Theorem \ref{teo:multigrafo} is available, a spectral result can often be extended from uniform to general hypergraphs by tracking how the cardinality matrix $\mathbf{C}$ replaces the scalar matrix $k\mathbf{I}$. The spectrum of general power hypergraphs is the clearest example of this pattern.

\section{Conclusion}\label{sec:conclusion}

In this paper, we develop the theory of line multigraphs for general hypergraphs. Using the incidence identity $\mathbf{B}^\mathrm{T}\mathbf{B} = \mathbf{C} + \mathbf{A}_{\mathcal{L}}$, we generalize most of the known results on this topic and prove several new ones. On the structural side, we showed that connectivity, linearity, and regularity transfer between a hypergraph and its line multigraph, and we described how much information the correspondence loses: every connected multigraph in which each vertex has at least two distinct neighbors is a line multigraph (Proposition \ref{prop:all_multigraphs}), and every hypergraph shares its line multigraph with infinitely many others (the line-equivalence relation of Section \ref{sec:structural}). On the spectral side, we proved that the eigenvalues of the line multigraph of a hypergraph of rank $r$ are at least $-r$, we determined the eigenspace and the multiplicity of $-r$ as an eigenvalue, and we showed that the presence of a collar is a sufficient combinatorial condition for $-r$ to be attained. We then applied these tools to bound the spectral radius of the signless Laplacian matrix and to determine the complete signless Laplacian spectrum of general power hypergraphs, extending results previously available only in the uniform case.

Some of the proofs given here for general hypergraphs are shorter than the original proofs for the uniform case. This is possible because the theory of line multigraphs condenses much of the hypergraph's information into a single framework, in place of the separate, case-by-case constructions that earlier proofs required. The open problems below record the points where our results are not yet complete.

\begin{enumerate}
\item Theorem \ref{teo:multiplicity} computes the multiplicity of $-r$ in $\mathcal{L}(\mathcal{H})$ as the nullity of the incidence matrix of the essential core, and Corollary \ref{cor:collar_general} shows that linearly independent collars provide a lower bound for it. Can one describe a combinatorial basis for the eigenspace of $-r$, identifying the hypergraph analogues of even cycles and of pairs of odd cycles joined by a path?

\item By Theorem \ref{teo:multiplicity}, $\lambda(\mathcal{L}(\mathcal{H})) > -r$ if and only if the incidence matrix of the essential core of $\mathcal{H}$ has linearly independent columns. Can this rank condition be replaced by a purely structural description, identifying the hypergraph analogues of trees and odd-unicyclic graphs?
\end{enumerate}

\bibliographystyle{siamplain}
\bibliography{Bibliografia}

\end{document}